\theoremstyle{definition}
\newtheorem{Unity}{Unity}[section]
\newtheorem{dfn}[Unity]{Definition}
\newtheorem{rmk}[Unity]{Remark}
\newtheorem{const}[Unity]{Construction}
\newtheorem{ass}[Unity]{Assumption}
\newtheorem{case1}{Case}
\newtheorem{case12}{Case}[case1]
\newtheorem{case13}{Case}[case12]
\newtheorem{claim}{Claim}
\newtheorem{case2}{Case}[claim]
\newtheorem{case22}{Case}[case2]
\newtheorem*{ack}{Acknowledgements}
\theoremstyle{plain}
\newtheorem{thm}[Unity]{Theorem}
\newtheorem{prop}[Unity]{Proposition}
\newtheorem{conj}[Unity]{Conjecture}
\newtheorem{lem}[Unity]{Lemma}
\newtheorem{cor}[Unity]{Corollary}
\let\oldenumerate\enumerate
\renewcommand{\enumerate}{
\oldenumerate
\setlength{\itemsep}{3pt}
\setlength{\leftskip}{-13pt}
}
\let\olditemize\itemize
\renewcommand{\itemize}{
\olditemize
\setlength{\itemsep}{2pt}
\setlength{\leftskip}{-15pt}
}
\begin{document}

\title{On the Picard number of Fano 6-folds with a non-small contraction}
\author{Taku Suzuki}
\keywords{Mukai's Conjecture; Fano manifolds; Picard number; pseudo-index; rational curves.}
\subjclass[2010]{14J45, 14E30.}
\address{Department of Mathematics, School of Fundamental Science and Engineering, Waseda University, 3-4-1 Ohkubo, Shinjuku, Tokyo 169-8555, Japan}
\email{s.taku.1231@moegi.waseda.jp}

\maketitle

\begin{abstract}
A generalization of S.\ Mukai's conjecture says that if $X$ is a Fano $n$-fold with Picard number $\rho_X$ and pseudo-index $i_X$, then $\rho_X(i_X-1) \leq n$, with equality if and only if $X \cong (\mathbb{P}^{i_X-1})^{\rho_X}$. 
In this paper, we prove that this conjecture holds if $n=6$ and either $X$ admits a contraction of fiber type or $X$ admits no small contractions. 
\end{abstract}

\section{Introduction}
We consider Fano manifolds, i.e., smooth complex projective varieties with ample anti-canonical line bundle. 
In 1988, S.\ Mukai proposed the following conjecture: 

\begin{conj}[{\cite{Mu}}]\label{Mukai}
Let $X$ be a Fano $n$-fold with Picard number $\rho _X$ and index $r_X$ (see Definition \ref{index}). 
Then $\rho _X (r_X-1) \leq n$, with equality if and only if $X \cong (\mathbb{P}^{r_X-1})^{\rho_X}$. 
\end{conj}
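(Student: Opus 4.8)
The plan is to reduce Mukai's original conjecture to the stronger pseudo-index version and to prove the latter directly. Recall the pseudo-index $i_X=\min\{-K_X\cdot C : C\subset X\text{ a rational curve}\}$; since $-K_X=r_XH$ with $H$ ample, every rational curve gives $-K_X\cdot C=r_X(H\cdot C)\geq r_X$, so $r_X\leq i_X$. Granting $\rho_X(i_X-1)\leq n$ with equality only for $(\mathbb{P}^{i_X-1})^{\rho_X}$, the inequality $\rho_X(r_X-1)\leq\rho_X(i_X-1)\leq n$ is immediate; and if $\rho_X(r_X-1)=n$ then $n\leq\rho_X(i_X-1)\leq n$ forces $i_X=r_X$, so the generalized classification gives $X\cong(\mathbb{P}^{i_X-1})^{\rho_X}=(\mathbb{P}^{r_X-1})^{\rho_X}$. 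It therefore suffices to prove the pseudo-index statement, which I now outline.

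For the pseudo-index statement I would argue by induction on $\rho_X$ using the Minimal Model Program. When $\rho_X=1$ the inequality $i_X-1\leq n$ holds because the anticanonical degree of a minimal rational curve is at most $n+1$. For $\rho_X\geq 2$, fix an elementary extremal contraction $\varphi\colon X\to Y$, so $\rho_X=\rho_Y+1$; every extremal ray has length $\geq i_X$ since $-K_X\cdot C\geq i_X$ on rational curves. In the fiber-type case, Wi\'sniewski's inequality bounds a general fiber $F$ by $\dim F\geq i_X-1$, and $F$ is Fano with $i_F\geq i_X$; the aim is to feed $F$ and $Y$ into the inductive hypothesis and combine the resulting bounds via $\dim X=\dim F+\dim Y$ and $\rho_X=\rho_Y+1$.

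The decisive difficulty is that for a birational contraction $Y$ need not be Fano, and the induction breaks most severely for a small contraction: no divisor is contracted, $Y$ is singular, and passing to a flip destroys the Fano property of the ambient space, so neither induction on $\rho$ nor on $\dim$ applies directly. This is exactly the obstruction the paper circumvents by assuming, for $n=6$, either a fiber-type contraction (where the above argument runs) or the absence of small contractions (where a divisorial-contraction analysis suffices); I expect the small-contraction case to be the main obstacle, tractable in low dimension only through a finer study of the possible fiber loci and their numerical invariants.

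Finally, for the equality case one shows that saturating every inductive inequality forces each elementary contraction to be a $\mathbb{P}^{i_X-1}$-bundle: the bound $\dim F\geq i_X-1$ must be an equality and $F\cong\mathbb{P}^{i_X-1}$ by Kobayashi--Ochiai, while $Y\cong(\mathbb{P}^{i_X-1})^{\rho_X-1}$ by induction. A splitting argument then promotes the iterated bundle structure to a genuine product $X\cong(\mathbb{P}^{i_X-1})^{\rho_X}$, which via the reduction above completes the equality case of Mukai's conjecture as well.
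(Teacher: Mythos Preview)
The statement you are addressing is labeled a \emph{Conjecture} in the paper and is not proved there; it remains open in general. There is thus no ``paper's own proof'' to compare against. What the paper actually establishes is the partial result Theorem~\ref{main}: the pseudo-index version (Conjecture~\ref{GMukai}) for $n=6$ under the extra hypothesis that $X$ has a fiber-type contraction or no small contractions. Your reduction of Conjecture~\ref{Mukai} to Conjecture~\ref{GMukai} via $r_X\le i_X$ is correct and standard, but it only shifts the burden to an equally open problem.

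Your inductive MMP outline for Conjecture~\ref{GMukai} is not a proof, and the gaps go beyond the small-contraction obstruction you flag. Even for an elementary fiber-type contraction $\varphi\colon X\to Y$, the target $Y$ need not be smooth and need not be Fano, and there is no reason for $i_Y\ge i_X$; so ``feeding $Y$ into the inductive hypothesis'' is not available. Likewise, in the divisorial case the target is typically singular and non-Fano, so induction on $\rho$ again stalls. The paper's approach to its partial results is entirely different from your sketch: instead of inducting on $\rho_X$ through contractions, it works directly with chains of rational curves (Constructions~\ref{const1}, \ref{const2}, \ref{const3}), bounds dimensions and numerical spans of loci via Lemmas~\ref{num} and~\ref{dim}, and uses Casagrande's theorem (Lemma~\ref{Ca2}) to transfer information from prime divisors to all of $N_1(X)$. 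Extremal rays enter only at the very end, and only to show that if $\rho_X\ge 5$ then every divisorial ray is forced into a short list of classes that cannot span $N_1(X)$.
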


Conjecture \ref{Mukai} for $r_X \geq n+1$ corresponds to a characterization of projective spaces by S.\ Kobayashi and T.\ Ochiai (\cite{KO}). 

L.\ Bonavero, C.\ Casagrande, O.\ Debarre, and S.\ Druel proposed the following more general conjecture: 

\begin{conj}[{\cite{BCDD}}]\label{GMukai}
Let $X$ be a Fano $n$-fold with Picard number $\rho _X$ and pseudo-index $i_X$ (see Definition \ref{pindex}). 
Then $\rho _X (i_X-1) \leq n$, with equality if and only if $X \cong (\mathbb{P}^{i_X-1})^{\rho_X}$. 
\end{conj}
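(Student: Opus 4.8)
\emph{Proof strategy.} The plan is to argue by a double induction, on the dimension $n$ and, for fixed $n$, on the Picard number $\rho_X$. The inequality is trivial when $\rho_X=1$, and the case $i_X\ge n+1$ is the Kobayashi--Ochiai characterization of projective space (\cite{KO}), which forces $\rho_X=1$ and $X\cong\mathbb{P}^n$; so one may assume $2\le i_X\le n$ and $\rho_X\ge 2$. The central tool is the analysis of an elementary extremal contraction $\varphi\colon X\to Y$ associated with an extremal ray $R$ whose minimal length $\ell(R)$ satisfies $\ell(R)\ge i_X$, together with the Ionescu--Wi\'sniewski fiber-dimension inequality: for an irreducible fiber $F$ of $\varphi$ and the exceptional locus $E$ one has $\dim E+\dim F\ge n+\ell(R)-1$.

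First I would treat the fiber-type case. Here $\rho_X=\rho_Y+1$ and a general fiber $F$ is a Fano manifold with $i_F\ge i_X$ and $\dim F=n-\dim Y\ge i_X-1$. Applying the induction hypothesis to $F$ yields $\rho_F(i_F-1)\le\dim F$; the point is then to compare $\rho_X(i_X-1)=(\rho_Y+1)(i_X-1)$ with $n=\dim F+\dim Y$. Since $Y$ is in general singular one cannot induct on $Y$ directly; instead I would replace $\varphi$ by the rationally connected fibration attached to a minimal dominating family of rational curves and use the bounds of Casagrande comparing $\rho_X$ with the Picard number of a smooth fiber and that of the target. In the equality case $\rho_X(i_X-1)=n$ these estimates must all be equalities, forcing $\dim F=i_X-1$ and hence $F\cong\mathbb{P}^{i_X-1}$; a rigidity argument for the family of such projective spaces then gives $X\cong(\mathbb{P}^{i_X-1})^{\rho_X}$.

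It remains to handle the case in which every elementary contraction of $X$ is birational. For a divisorial contraction with exceptional divisor $E$, the estimate $\dim F\ge n+i_X-1-\dim E=i_X$ for every positive-dimensional fiber shows that $E$ is swept out by low-degree rational curves; analyzing the induced family on $E$ reduces the Picard-number bound to that of $E$ and its image, which is handled by induction. The genuine obstacle is the \emph{small} contraction case: here $E$ has codimension at least two, so it contributes no class to the Picard group and cannot be used to reduce $\rho_X$, and, crucially, after a flip both smoothness and the Fano condition are destroyed, so one cannot iterate the induction along a run of the minimal model program. My strategy here would be to bound the number of small contracting rays and to show that a Fano manifold whose rays are all small and for which $\rho_X(i_X-1)$ is close to $n$ must nonetheless carry a covering family forcing fiber-type behaviour on some subvariety, thereby reducing to the first case; making this reduction unconditional is exactly the step that is currently out of reach in arbitrary dimension, and is why the present paper establishes the conjecture only for $n=6$ and only under the hypothesis that $X$ admits a fiber-type contraction or no small contraction.
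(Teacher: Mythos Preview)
The statement you are attempting to prove is Conjecture~\ref{GMukai}, which the paper states as an \emph{open conjecture}; there is no proof of it in the paper to compare against. The paper's actual contribution is Theorem~\ref{main}, which settles the conjecture only for $n=6$ and only under the extra hypothesis that $X$ admits a fiber-type contraction or admits no small contractions. So your proposal is not being measured against any argument the paper gives for this statement, and indeed you yourself acknowledge in the last paragraph that the key reduction in the small-contraction case is ``currently out of reach in arbitrary dimension.'' What you have written is a strategy outline, not a proof.

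Even as a strategy, several steps are unjustified. In the fiber-type case, the inductive use of a general fiber $F$ and the target $Y$ does not by itself yield $\rho_X(i_X-1)\le n$: the available comparisons between $\rho_X$, $\rho_F$, and $\rho_Y$ are not sharp enough to close the inequality in general, and $Y$ is typically singular, which you note but do not resolve. In the divisorial case, ``reducing to $E$ and its image'' is vague, since neither is smooth or Fano in general. There is also a factual slip: the case $i_X\ge n+1$ of Conjecture~\ref{GMukai} is the Cho--Miyaoka--Shepherd-Barron characterization of $\mathbb{P}^n$ (\cite{CMSB}), not Kobayashi--Ochiai (\cite{KO}); the latter concerns the index $r_X$ rather than the pseudo-index $i_X$. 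Finally, note that the paper's own arguments for the $n=6$ cases it does handle proceed quite differently from your MMP-style outline: they work directly with families of rational curves, rc-fibrations, and dimension estimates for loci such as ${\rm Locus}(\mathscr{V}_m,\ldots,\mathscr{V}_1;Y)$ (Lemmas~\ref{num}, \ref{dim}, \ref{lem1} and Constructions~\ref{const1}, \ref{const2}, \ref{const3}), rather than through fiber-dimension bounds for extremal contractions.
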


Conjecture \ref{GMukai} for $i_X \geq n+1$ corresponds to a characterization of projective spaces by K.\ Cho, Y.\ Miyaoka, and N.\ I.\  Shepherd-Barron (\cite{CMSB}). 
Conjecture \ref{GMukai} has been investigated by many authors, and it has been proved in the following cases: 
\begin{itemize}
\item In 2003, by L.\ Bonavero, C.\ Casagrande, O.\ Debarre, and S.\ Druel (\cite{BCDD}), 
\begin{itemize}
\item $n \leq 4$,
\item $X$ is homogeneous,
\item $X$ is toric and either $i_X \geq \frac{n+3}{3}$ or $n \leq 7$.
\end{itemize}
\item In 2004, by M.\ Andreatta, E.\ Chierici, and G.\ Occhetta (\cite{ACO}), 
\begin{itemize}
\item $n = 5$,
\item $i_X \geq \frac{n+3}{3}$ and $X$ admits an unsplit dominating family of rational curves (see Definition \ref{unsplit}).
\end{itemize}
\item In 2006, by C.\ Casagrande (\cite{Ca1}), 
\begin{itemize}
\item $X$ is toric.
\end{itemize}
\item In 2010, by C.\ Novelli and G.\ Occhetta (\cite{NO}), 
\begin{itemize}
\item $i_X \geq \frac{n+3}{3}$.
\end{itemize}
\item In 2012, by C.\ Novelli (\cite{No}), 
\begin{itemize}
\item $i_X = \frac{n+1}{3}, \frac{n+2}{3}$, 
\item $i_X = \frac{n}{3}$ and $X$ admits an unsplit dominating family of rational curves. 
\end{itemize}
\end{itemize}

By \cite{NO} and \cite{No}, we know that Conjecture \ref{GMukai} for $n=6$ is true if either $i_X \geq 3$ or $X$ admits an unsplit dominating family of rational curves. 
In this paper, we consider Fano $6$-folds which admit no unsplit dominating families of rational curves, and we prove the following theorem: 

\begin{thm}\label{main}
Let $X$ be a Fano $6$-fold with Picard number $\rho _X$ and pseudo-index $i_X \geq 2$. 
Assume that $X$ admits no unsplit dominating families of rational curves. 
\begin{enumerate}
\item If $X$ admits a contraction of fiber type, then $\rho_X \leq 2$ holds. 
\item If $X$ admits no small contractions, then $\rho_X \leq 4$ holds. 
\end{enumerate}
In particular, Conjecture \ref{GMukai} for $n=6$ is true if either $X$ admits a contraction of fiber type or $X$ admits no small contractions. 
\end{thm}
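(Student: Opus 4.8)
The plan is as follows. By \cite{NO} and \cite{No} the generalized Mukai conjecture for $n=6$ is already known when $i_X\ge3$ or when $X$ carries an unsplit dominating family, so the content of both parts is the case $i_X=2$, and we argue throughout under the standing hypothesis that $X$ admits no unsplit dominating family. The tools are the usual ones for Fano manifolds: the polyhedral cone $\overline{NE}(X)$ and its extremal contractions $\mathrm{cont}_R$; the length $\ell(R)$ of an extremal ray and the Ionescu--Wi\'sniewski inequality $\dim E+\dim F_R\ge\dim X+\ell(R)-1$, where $E$ is the exceptional locus of $\mathrm{cont}_R$ and $F_R$ a general fiber of $\mathrm{cont}_R|_E$; the theory of (locally) unsplit families of rational curves, with the estimate $\dim\mathrm{Locus}(V_x)\ge i_X-1$ for general $x$ and the attendant bounds on $N_1(\mathrm{Locus}(V_W),X)$; the rationally connected fibration associated with a locally unsplit dominating family; and the validity of the conjecture in dimensions $\le5$ (the case $n=5$ being \cite{ACO}), which we apply to general fibers of contractions. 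The basic device, used repeatedly, is: \emph{from a dominating family extract one that is forced to be unsplit, contradicting the standing hypothesis}; it is the inequality $i_X\ge2$ that keeps forcing minimal degenerations to be irreducible.

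For part (1), let $\varphi=\mathrm{cont}_R\colon X\to Y$ be a fiber-type elementary contraction, so that $\rho_X=\rho_Y+1$ and it suffices to show $\rho_Y\le1$. Let $F$ be a general fiber and $d=\dim F=6-\dim Y$. Every curve in $F$ has class in $R$, hence $-K_X$-degree $\ge\ell(R)\ge i_X=2$, so $i_F\ge2$; and through a general point of the Fano $d$-fold $F$ there is a rational curve of $R$ of $-K_X$-degree $\le d+1$, so there is a dominating family $V$ with $[V]\in R$ and minimal degree $\delta:=-K_X\cdot V\le d+1$, which is locally unsplit. If $V$ were unsplit it would be an unsplit dominating family, which is excluded; hence $V$ has a reducible member $C_1+\dots+C_k$ ($k\ge2$) in its closure. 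Since $\sum[C_i]=[V]$ lies in the ray $R$, we have $[C_i]=a_i[V]$ with $a_i\in(0,1)$, $\sum a_i=1$ and $a_i\delta=-K_X\cdot C_i\ge2$, so $\delta\ge2k\ge4$ and therefore $d\ge\delta-1\ge3$, i.e.\ $\dim Y\le3$. When $d\ge5$ we get $\dim Y\le1$, hence $Y$ is a point or a smooth curve and $\rho_Y\le1$, as required. The cases $d=3$ and $d=4$ are the crux: there $\delta=4$, resp.\ $\delta\in\{4,5\}$, the reducible member has exactly two components (of degrees $2,2$, resp.\ $2,3$), so $\ell(R)=2$, and one must study $F$ — a Fano threefold, resp.\ fourfold, with $i_F\ge2$ — together with the remaining extremal rays of $X$. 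For this one combines the low-dimensional classification with the bound on $N_1(\mathrm{Locus}(V_x),X)$ (which constrains the classes of the other rays, since $\mathrm{Locus}(V_x)$ lies inside a fiber of $\varphi$), with the fact that $X$ admits no second fiber-type ray in this range — two fiber-type elementary contractions with general fibers $F,F'$ force $\dim(F\cap F')=0$, hence $\dim F+\dim F'\le6$ — and, once more, with the ``extract an unsplit family'' device applied inside $F$ and to horizontal families, to rule out $\rho_X\ge3$. The delicate point, and the main obstacle, is controlling the potentially large special fibers of $\varphi$; the outcome is $\rho_Y\le1$, i.e.\ $\rho_X\le2$.

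For part (2), assume $X$ has no small contraction. If $X$ admits a contraction of fiber type we are done by part (1), since then $\rho_X\le2\le4$; so assume every extremal contraction of $X$ is divisorial. For $\sigma=\mathrm{cont}_R\colon X\to X'$ divisorial with exceptional divisor $E$, the Ionescu--Wi\'sniewski inequality forces the general fiber of $\sigma|_E$ to have dimension $\ge\ell(R)\ge i_X=2$, so $\dim\sigma(E)\le3$: every extremal contraction of $X$ sends a divisor onto a variety of dimension $\le3$, and $X'$ is $\mathbb{Q}$-factorial with $\rho_{X'}=\rho_X-1$. Take a minimal, hence locally unsplit, dominating family $V$; it is not unsplit by the standing hypothesis, and since $-K_X\cdot V\le7$ and each component of a reducible member has $-K_X$-degree $\ge i_X=2$, such a member has at most three components, whose classes span a subspace $\mathcal N\subseteq N_1(X)$ of dimension $\le3$. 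The plan is to bound $\rho_X$ by controlling both the base $Z$ of the rc$(V)$-fibration and the interaction of $\mathcal N$ with the divisorial contractions: as in \cite{ACO} and \cite{NO}, one shows that $X$ is rc-connected by chains of curves from $V$ and from the families produced by the splittings, that $\rho_X$ is bounded by the number of numerically independent such families (via $N_1$-estimates for loci of chains of curves), that this number is small because $-K_X\cdot V\le7$ limits the splittings and because each divisorial contraction pins down spanning classes through estimates on the corank of $N_1(E,X)$ in $N_1(X)$, and that $\dim Z$ is small because $\dim\mathrm{Locus}(V_x)\ge i_X-1=1$ and the absence of small contractions makes the relevant (relative) minimal model programs proceed through morphisms with controlled fibers, to which the $\le5$-dimensional conjecture can be applied. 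Combining these estimates yields $\rho_X\le4$; the main obstacle is the bookkeeping of how the few divisorial contractions, the rc$(V)$-fibration, and the splitting families sit inside $N_1(X)$.

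For the final assertion of Theorem \ref{main}: let $X$ be a Fano $6$-fold admitting a contraction of fiber type or no small contraction. If $i_X=1$ the conjecture is trivial; if $i_X\ge3$, or $X$ carries an unsplit dominating family, it holds by \cite{NO} and \cite{No}; and otherwise $i_X=2$ with no unsplit dominating family, so part (1) or part (2) gives $\rho_X\le4$, whence $\rho_X(i_X-1)=\rho_X<6$. Since $(\mathbb P^{i_X-1})^{\rho_X}$ with $\rho_X(i_X-1)=6$ would be $(\mathbb P^1)^6$, which carries an unsplit dominating family and is therefore not among the $X$ just treated, the equality case cannot arise, and Conjecture \ref{GMukai} for $n=6$ holds in both cases.
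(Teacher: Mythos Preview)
Your proposal has genuine gaps in both parts, and in each case the paper's argument turns on a specific idea that your outline does not supply.

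\textbf{Part (1).} You reduce to the cases $d=\dim F\in\{3,4\}$ and then write that ``the outcome is $\rho_Y\le1$'' after invoking low-dimensional classification, horizontal families, and control of special fibers --- but none of this is actually carried out, and it is not clear that it can be. The paper's proof is short and rests on a point you do not use: if $R$ is an extremal ray of fiber type, one may choose a dominating family $\mathscr{F}_1$ of minimal degree with $[\mathscr{F}_1]\in R$; then $\mathscr{F}_1$ is locally unsplit \emph{and quasi-unsplit}, because every component of any degeneration has class in the extremal ray $R$, hence is proportional to $[\mathscr{F}_1]$. Running Construction \ref{const1} from this $\mathscr{F}_1$, Proposition \ref{prop1} gives $k\le2$ with $\mathscr{F}_2$ (if present) unsplit, and since both families are quasi-unsplit, Lemma \ref{rcc} immediately yields $\rho_X\le k\le2$. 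Your approach via $\rho_Y$ never exploits extremality of $R$ in this way, and the fiber-by-fiber analysis you sketch is neither completed nor obviously completable.

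\textbf{Part (2).} Here your outline is too vague to constitute a proof: you list the standard ingredients (Ionescu--Wi\'sniewski, rc$(V)$-fibration, splitting families, dimension $\le5$ results) and assert that ``combining these estimates yields $\rho_X\le4$'', but you do not say what is being combined or how. The paper's mechanism is quite specific and not present in your sketch. One first uses Proposition \ref{prop1} to reduce to cases (a), (b1), (b2); case (b1) is handled separately (Proposition \ref{prop5}). In the remaining cases one builds, via Constructions \ref{const2} and \ref{const3}, explicit pairs of unsplit families $(\mathscr{V}_i,\mathscr{W}_i)$ coming from the two-component splittings of $\overline{\mathscr{F}}$, with $[\mathscr{V}_i]+[\mathscr{W}_i]=[\mathscr{F}]$ and such that $[\mathscr{F}],[\mathscr{V}_1],\ldots,[\mathscr{V}_{\rho-1}]$ (resp.\ together with $[\mathscr{G}]$) span $N_1(X)$. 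The heart of the argument (Propositions \ref{prop3} and \ref{prop4}) is then to show, via repeated use of the trichotomy in Lemma \ref{lem1} and the divisor bound of Lemma \ref{Ca2}, that \emph{every} divisorial extremal ray $R$ must satisfy $R=\mathbb{R}_{\ge0}[\mathscr{W}_i]$ for some $i$. Since all extremal rays are divisorial by hypothesis, $N_1(X)$ would be spanned by the $[\mathscr{W}_i]$, of which there are only $\rho-1$ (resp.\ $\rho-2$); this is the contradiction if $\rho_X\ge5$. Nothing in your outline approximates this ``pin every extremal ray to a $\mathscr{W}_i$'' step, and the generic estimates you mention (corank of $N_1(E,X)$, Ionescu--Wi\'sniewski, the $\le5$-dimensional conjecture applied to fibers) do not by themselves produce it.
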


\begin{ack}
The author would like to express his gratitude to his supervisor Professor Hajime Kaji for valuable discussions. 
The author would also like to thank Professor Yasunari Nagai and Professor Kazunori Yasutake for beneficial comments. 
The author is supported by Grant-in-Aid for Research Activity Start-up from the Japan Society for the Promotion of Science. 
\end{ack}

\section{Families of rational curves}

Throughout this section, we consider a smooth complex projective manifold $X$.  

\begin{dfn}
We denote by ${\rm RatCurves}^n(X)$ the normalization of the scheme of rational curves on $X$ (see \cite[II.2]{Ko}). 
A \textit{family of rational curves} on $X$ is an irreducible component of ${\rm RatCurves}^n(X)$. 
Given a rational curve $C$ on $X$, we define a \textit{family of deformations} of $C$ to be a family of rational curves containing $C$. 
\end{dfn}

\begin{dfn}
Let $\mathscr{V}$ be a family of rational curves. 
Let $\mathscr{U}$ be the universal family of $\mathscr{V}$, and let $p:\mathscr{U} \rightarrow \mathscr{V}$ and $q:\mathscr{U} \rightarrow X$ be the associated morphisms. 
$q(\mathscr{U})$ is denoted by ${\rm Locus}(\mathscr{V})$. 
We say that $\mathscr{V}$ is \textit{dominating} if $\overline{{\rm Locus}(\mathscr{V})}=X$. 

For a subvariety $Y \subset X$, $p(q^{-1}(Y))$ (the subscheme of $\mathscr{V}$ which parametrizes curves intersecting $Y$) is denoted by $\mathscr{V}_Y$, and $q(p^{-1}(\mathscr{V}_Y))$ is denoted by ${\rm Locus}(\mathscr{V}; Y)$. 
In particular, when $Y$ is a point, $\mathscr{V}_{\{x\}}$ (resp. ${\rm Locus}(\mathscr{V}; \{x\})$) is also denoted by $\mathscr{V}_x$ (resp. ${\rm Locus}(\mathscr{V}; x)$). 

For families $\mathscr{V}_1,$ $\ldots ,$ $\mathscr{V}_m$ of rational curves, we inductively define 
$${\rm Locus}(\mathscr{V}_m, \ldots , \mathscr{V}_1; Y):={\rm Locus}(\mathscr{V}_m; {\rm Locus}(\mathscr{V}_{m-1}, \ldots , \mathscr{V}_1; Y)).$$
When $\mathscr{V}_1= \cdots = \mathscr{V}_m=\mathscr{V}$, we denote it by 
$${\rm Locus}_m(\mathscr{V}; Y):={\rm Locus}(\underbrace{\mathscr{V}, \ldots , \mathscr{V}}_{m} ; Y).$$
\end{dfn}

\begin{dfn}
For a family $\mathscr{V}$ of rational curves, we denote by $\overline{\mathscr{V}}$ the closure of $\mathscr{V}$ in ${\rm Chow}(X)$. 
Note that $\overline{\mathscr{V}}$ parametrizes connected 1-cycles whose every component is a rational curve. 
We similarly define ${\rm Locus}(\overline{\mathscr{V}})$, $\overline{\mathscr{V}}_Y$, ${\rm Locus}(\overline{\mathscr{V}}; Y)$, ${\rm Locus}(\overline{\mathscr{V}_m}, \ldots , \overline{\mathscr{V}_1}; Y)$, and ${\rm Locus}_m(\overline{\mathscr{V}}; Y)$. 
\end{dfn}

\begin{dfn}\label{unsplit}
Let $\mathscr{V}$ be a family of rational curves. 
\begin{enumerate}
\item $\mathscr{V}$ is said to be \textit{unsplit} if it is proper (i.e., $\overline{\mathscr{V}}=\mathscr{V}$ as sets). 
\item For a closed subvariety $Y \subset X$ intersecting ${\rm Locus}(\mathscr{V})$ (resp.\ a point $x \in {\rm Locus}(\mathscr{V})$), $\mathscr{V}$ is said to be \textit{unsplit on $Y$} (resp.\ \textit{unsplit at $x$}) if $\mathscr{V}_Y$ (resp.\ $\mathscr{V}_x$) is proper. 
\item $\mathscr{V}$ is said to be \textit{locally unsplit} if  $\mathscr{V}$ is unsplit at a general point of ${\rm Locus}(\mathscr{V})$. 
\item $\mathscr{V}$ is said to be \textit{quasi-unsplit} if every component of any cycle parametrized by $\overline{\mathscr{V}}$ is numerically proportional to $\mathscr{V}$. 
\end{enumerate}
\end{dfn}

\begin{dfn}\label{chain}
Let $\mathscr{V}_1,$ $\ldots ,$ $\mathscr{V}_m$ be families of rational curves. 
We define a \textit{connected chain parametrized by} $(\overline{\mathscr{V}_1}, \ldots , \overline{\mathscr{V}_m})$ as a chain $\{\Gamma_i \}_{i=1}^m$ such that each $\Gamma_i$ is a cycle parametrized by $\overline{\mathscr{V}_i}$ and $\Gamma_i \cap \Gamma_{i+1}\ne \emptyset$. 
We say that two subvarieties $Y,Z \subset X$ \textit{can be connected by} $(\overline{\mathscr{V}_1}, \ldots , \overline{\mathscr{V}_m})$ if there exists a connected chain $\{\Gamma_i \}_{i=1}^m$ parametrized by $(\overline{\mathscr{V}_1}, \ldots , \overline{\mathscr{V}_m})$ such that $Y\cap \Gamma_1\ne \emptyset$ and $\Gamma_m \cap Z\ne \emptyset$. 
\end{dfn}

\begin{dfn}
Let $\mathscr{V}_1,$ $\ldots ,$ $\mathscr{V}_k$ be families of rational curves. 
We say that two points $x, y \in X$ are in \textit{$m$-rc$(\mathscr{V}_1, \ldots , \mathscr{V}_k)$-relation} if they can be connected by $(\overline{\mathscr{V}_{i(1)}}, \ldots , \overline{{\mathscr{V}_{i(m)}}})$ for some $1 \leq i(j) \leq k$. 
We say that $x$ and $y$ are in \textit{rc$(\mathscr{V}_1, \ldots , \mathscr{V}_k)$-relation} if they are in $m$-rc$(\mathscr{V}_1, \ldots , \mathscr{V}_k)$-relation  for some $m$. 
It is known (\cite[IV Theorem 4.16]{Ko}) that there exists an open subvariety $U \subset X$, a proper surjective morphism $\pi : U \rightarrow Z$ with connected fibers, and an integer $m \le 2^{{\rm dim}\,X-{\rm dim}\,Z}-1$ such that
\begin{itemize}
\item[(i)] the rc$(\mathscr{V}_1, \ldots , \mathscr{V}_k)$-relation restricts to an equivalence relation on $U$,
\item[(ii)] for every $z \in Z$, $\pi ^{-1}(z)$ coincides with an equivalence class for the rc$(\mathscr{V}_1, \ldots , \mathscr{V}_k)$-relation,
\item[(iii)] for every $z \in Z$, any two points of $\pi ^{-1}(z)$ are in $m$-rc$(\mathscr{V}_1, \ldots , \mathscr{V}_k)$-relation. 
\end{itemize}
We call the morphism $\pi$ \textit{the rc$(\mathscr{V}_1, \ldots ,\mathscr{V}_k)$-fibration}. 
If $Z$ is a point, then we say that $X$ is rc$(\mathscr{V}_1, \ldots , \mathscr{V}_k)$-connected. 
\end{dfn}

\begin{dfn}
We denote by $N_1(X)$ the $\mathbb{R}$-vector space of 1-cycles with real coefficients modulo numerical equivalence. 
The Picard number $\rho_X$ is the dimension of the $\mathbb{R}$-vector space $N_1(X)$. 
For a subvariety $Y \subset X$, we denote by $N_1(Y, X)$ (resp.\ $NE(Y, X)$) the vector space (resp.\ cone) in $N_1(X)$ which is generated by numerical classes of curves contained in $Y$. 

For a family $\mathscr{V}$ of rational curves, we denote by  $[\mathscr{V}]$ the numerical class of a curve of $\mathscr{V}$, and by $D\cdot \mathscr{V}$ the intersection number of a divisor $D$ and a curve of $\mathscr{V}$
\end{dfn}

\begin{lem}[{see \cite[Lemmas 4.1 and 5.1]{ACO} and \cite[Lemmas 3.2]{Oc}}]\label{num}
Let $\mathscr{V}$ be a family of rational curves, $Y \subset X$ a closed subvariety, and $C$ a curve contained in ${\rm Locus}(\overline{\mathscr{V}};Y)$. 
Then we can write
$$[C] =a [C_Y] + \sum_{i=1}^k b_i [C_i],$$
where $C_Y$ is a curve contained in $Y$, each $C_i$ is an irreducible component of a cycle parametrized by $\overline{\mathscr{V}}_Y$, and $a,b_i \in \mathbb{Q}$. 
Furthermore, if $\mathscr{V}$ is unsplit, then we can write
$$[C] =a [C_Y] + b [\mathscr{V}],$$
where $C_Y$ is a curve contained in $Y$, $a \in \mathbb{Q}_{\ge 0}$, and $b \in \mathbb{Q}$. 
\end{lem}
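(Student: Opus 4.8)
The plan is to lift $C$ to the total space of the restricted universal family, decompose its class there using the rational-curve structure of the fibres, and push the resulting identity down to $X$. Let $T$ be an irreducible component of $\overline{\mathscr{V}}_Y$ over which the corresponding locus contains $C$, let $p_T\colon\mathscr{U}_T\to T$ be the universal family restricted to $T$, and let $q_T\colon\mathscr{U}_T\to X$ be the evaluation morphism, so that $C\subset q_T(\mathscr{U}_T)$. Since $T$ is closed in the proper variety $\overline{\mathscr{V}}\subset{\rm Chow}(X)$, the morphism $q_T$ is proper, so $q_T^{-1}(C)$ dominates $C$ and we may choose an irreducible curve $\tilde C\subset q_T^{-1}(C)$ with $q_T(\tilde C)=C$; set $n_0:=\deg(\tilde C/C)\ge 1$. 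If $D:=p_T(\tilde C)$ is a point, then $\tilde C$ lies in a single fibre of $p_T$, so $C$ is contained in (hence equals) an irreducible component of the cycle parametrized by $D\in\overline{\mathscr{V}}_Y$; this is the first decomposition with $a=0$ and a single summand, and in the unsplit case that cycle is an irreducible rational curve and $[C]=[\mathscr{V}]$. So assume henceforth that $D$ is a curve.

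Because every cycle parametrized by $\overline{\mathscr{V}}_Y$ meets $Y$, the locus $q_T^{-1}(Y)$ dominates $T$, hence dominates $D$; choose an irreducible curve $B'\subset q_T^{-1}(Y)$ with $p_T(B')=D$. Both $\tilde C$ and $B'$ then lie on the surface $S:=p_T^{-1}(D)$, which is fibred over $D$ by rational curves (the supports of the parametrized cycles). On the normalization, or a resolution, of $S$ the kernel of the pushforward to $D$ is generated by classes of curves contracted to points of $D$ (an elementary fact about a surface fibred over a curve with connected fibres), so, writing $m:=\deg(\tilde C/D)\ge 1$ and $\delta':=\deg(B'/D)\ge 1$, the $p_T$-trivial class $\delta'[\tilde C]-m[B']$ is a $\mathbb{Q}$-combination of classes of curves contracted by $p_T$. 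When $\mathscr{V}$ is unsplit, $S$ is honestly a $\mathbb{P}^1$-bundle over $D$ and one may simply write $[\tilde C]=m[\sigma]+\alpha[f]$, $[B']=\delta'[\sigma]+\beta[f]$ on a ruled surface, so that $\delta'[\tilde C]-m[B']=(\delta'\alpha-m\beta)[f]$.

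Now push forward by $q_T$. Each curve contracted by $p_T$ is mapped by $q_T$ into the support of a single cycle parametrized by $\overline{\mathscr{V}}_Y$, so its image class is a $\mathbb{Q}$-combination of classes of irreducible components of such cycles; moreover $q_{T*}[\tilde C]=n_0[C]$ and $q_{T*}[B']=c\,[C_Y]$ for some curve $C_Y\subset Y$ and some $c\in\mathbb{Q}_{\ge 0}$ (with $c=0$ when $q_T(B')$ is a point). Dividing the pushed-forward relation by $n_0\delta'>0$ yields $[C]=a[C_Y]+\sum_i b_i[C_i]$ with $a=mc/(n_0\delta')\ge 0$ and $b_i\in\mathbb{Q}$, each $C_i$ an irreducible component of a cycle parametrized by $\overline{\mathscr{V}}_Y$; this is the first assertion. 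When $\mathscr{V}$ is unsplit, every cycle parametrized by $\overline{\mathscr{V}}=\mathscr{V}$ is an irreducible rational curve of class $[\mathscr{V}]$, so $q_{T*}[f]=[\mathscr{V}]$ and the relation collapses to $[C]=a[C_Y]+b[\mathscr{V}]$ with $a\ge 0$, which is the second assertion.

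The genuinely content-bearing inputs are the structure of the universal family — that $p$ is a $\mathbb{P}^1$-bundle over $\mathscr{V}$ and that over $\overline{\mathscr{V}}$ its fibres are connected chains of rational curves — together with the surface-over-a-curve fact used in the middle paragraph; the rest is the projection formula plus a separate check of the degenerate cases ($C\subset Y$, or $q_T(B')$ a point, or $\tilde C$ already lying over $Y$), in each of which the stated decomposition reduces to a triviality. I expect the main nuisance to be purely organizational: making precise the scheme-theoretic normalizations ($\overline{\mathscr{V}}_Y$ as a subscheme of ${\rm Chow}(X)$, its universal cycle, and passing to a suitable resolution of $S$) while keeping exact track of which irreducible components of cycles of $\overline{\mathscr{V}}_Y$ appear on the right-hand side.
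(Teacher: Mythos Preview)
The paper does not supply its own proof of this lemma; it merely cites \cite[Lemmas 4.1 and 5.1]{ACO} and \cite[Lemma 3.2]{Oc}. Your argument is precisely the standard one found in those references: lift $C$ to the universal family over (a component of) $\overline{\mathscr{V}}_Y$, compare its class with that of a multisection landing in $Y$ on the resulting ruled surface over a curve, and push forward by the evaluation map. The proof is correct, and in fact you obtain $a\ge 0$ already in the first (not necessarily unsplit) statement, which is slightly more than what is asserted. The only cosmetic point worth tightening is the passage to a smooth model of the surface $S=p_T^{-1}(D)$: one should base-change to the normalization of $D$ and then resolve, so as to guarantee a surface fibred over a smooth curve with connected fibres before invoking the fact that vertical classes are generated by fibre components; you allude to this but it would be worth spelling out in a final write-up.
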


\begin{lem}[{see \cite[Corollary 4.4]{ACO}}]\label{rcc}
Let $\mathscr{V}_1,$ $\ldots ,$ $\mathscr{V}_k$ be quasi-unsplit families of rational curves. 
Assume that $X$ is rc$(\mathscr{V}_1, \ldots , \mathscr{V}_k)$-connected. 
Then $N_1(X)$ is generated by $[\mathscr{V}_1],$ $\ldots ,$ $[\mathscr{V}_k]$. 
\end{lem}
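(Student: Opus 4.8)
The plan is to show that the subspace $W := \langle [\mathscr{V}_1], \ldots, [\mathscr{V}_k] \rangle \subseteq N_1(X)$ is in fact all of $N_1(X)$; since the inclusion $W \subseteq N_1(X)$ is obvious, everything reduces to proving $N_1(X) \subseteq W$. Because $X$ is irreducible and $N_1(X)$ is spanned by classes of curves contained in $X$, it suffices to exhibit a single closed subvariety $Y \subseteq X$ with $Y = X$ and $N_1(Y, X) \subseteq W$; indeed, $N_1(X,X)$ is by definition generated by all curve classes, hence equals $N_1(X)$. The main work is therefore to produce such a $Y$ as a locus of chains of rational curves emanating from a fixed point, while keeping numerical control over every curve living inside it.

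First I would establish the following inductive claim: for every point $x \in X$ and every index sequence $(i(1), \ldots, i(m))$ with $1 \le i(j) \le k$, the locus
$$Y_m := {\rm Locus}(\overline{\mathscr{V}_{i(m)}}, \ldots, \overline{\mathscr{V}_{i(1)}}; x)$$
satisfies $N_1(Y_m, X) \subseteq W$. The base case $m = 0$ is trivial, since $Y_0 = \{x\}$ carries no curves. For the inductive step, write $Y_m = {\rm Locus}(\overline{\mathscr{V}_{i(m)}}; Y_{m-1})$ and take an arbitrary irreducible curve $C \subseteq Y_m$. Applying Lemma \ref{num} with $\mathscr{V} = \mathscr{V}_{i(m)}$ and $Y = Y_{m-1}$ yields $[C] = a[C_Y] + \sum_i b_i [C_i]$, where $C_Y \subseteq Y_{m-1}$ and each $C_i$ is an irreducible component of a cycle parametrized by $(\overline{\mathscr{V}_{i(m)}})_{Y_{m-1}}$. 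The inductive hypothesis gives $[C_Y] \in N_1(Y_{m-1}, X) \subseteq W$, while quasi-unsplitness of $\mathscr{V}_{i(m)}$ forces each $[C_i]$ to be a rational multiple of $[\mathscr{V}_{i(m)}] \in W$. Hence $[C] \in W$, which proves the claim.

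Next I would invoke rc-connectedness to conclude. Since $X$ is rc$(\mathscr{V}_1, \ldots, \mathscr{V}_k)$-connected, the target $Z$ of the rc-fibration $\pi : U \rightarrow Z$ is a point, so $U = \pi^{-1}(z)$ is a nonempty open, hence dense, subset of the irreducible variety $X$, and property (iii) supplies a fixed integer $m$ such that any two points of $U$ lie in $m$-rc-relation. Fix $x \in U$. Then every $y \in U$ is joined to $x$ by a length-$m$ chain, so $y$ belongs to $Y_m^{(\iota)} := {\rm Locus}(\overline{\mathscr{V}_{i(m)}}, \ldots, \overline{\mathscr{V}_{i(1)}}; x)$ for some index sequence $\iota = (i(1), \ldots, i(m))$. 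There are only finitely many such sequences, so $\bigcup_\iota Y_m^{(\iota)}$ is a closed subset of $X$ containing the dense set $U$, and therefore equals $X$. As $X$ is irreducible, a finite union of closed subvarieties can exhaust $X$ only if one of them is already $X$, say $Y_m^{(\iota_0)} = X$. By the claim, $N_1(X) = N_1(Y_m^{(\iota_0)}, X) \subseteq W$, so $N_1(X) = W$, as desired.

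The only genuinely delicate points are the two that make the last paragraph work. The first is that rc-connectedness delivers a \emph{uniform} chain length $m$; this is guaranteed by property (iii) of the rc-fibration rather than being something we must prove, so no padding of chains is needed. The second, and the true conceptual step, is the use of irreducibility of $X$ to upgrade ``a dense open subset is covered by finitely many chain loci'' into ``one single full chain locus equals $X$'': it is precisely this that lets us pass from controlling curves inside proper subvarieties to controlling all of $N_1(X)$.
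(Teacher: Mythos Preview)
The paper does not supply its own proof of this lemma; it is simply quoted from \cite[Corollary 4.4]{ACO}. Your argument is correct and is essentially the standard one: iterate Lemma \ref{num} along a chain to show that each locus ${\rm Locus}(\overline{\mathscr{V}_{i(m)}}, \ldots, \overline{\mathscr{V}_{i(1)}}; x)$ has $N_1(\,\cdot\,,X) \subseteq W$ (quasi-unsplitness being precisely what forces every component of a degenerate cycle to be proportional to some $[\mathscr{V}_j]$), and then use rc-connectedness with the uniform chain length $m$ and irreducibility of $X$ to conclude that one such closed locus already equals $X$.
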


\begin{lem}[{see \cite[IV Corollary 2.6]{Ko} and \cite[Lemma 5.4]{ACO}}]\label{dim}
Let $\mathscr{V}_1,$ $\ldots ,$ $\mathscr{V}_m$ be numerically independent families of rational curves. 
Assume that $\mathscr{V}_2,$ $\ldots ,$ $\mathscr{V}_m$ are unsplit. \\[3pt]
{\rm (1)} Let $x \in X$ be a point such that 
\begin{itemize}
\item ${\rm Locus}(\mathscr{V}_m,\ldots ,\mathscr{V}_1; x)$ is non-empty,
\item $\mathscr{V}_1$ is unsplit at $x$. 
\end{itemize}
Then 
$${\rm dim}\, {\rm Locus}(\mathscr{V}_m,\ldots ,\mathscr{V}_1; y)  \geq {\rm codim}_X {\rm Locus}(\mathscr{V}_1)+\sum_{i=1} ^m(-K_X \cdot \mathscr{V}_i)-m.$$ 
{\rm (2)} Assume that $\mathscr{V}_1$ is also unsplit. 
Let $Y \subset X$ be an irreducible closed subvariety such that
\begin{itemize}
\item ${\rm Locus}(\mathscr{V}_m,\ldots ,\mathscr{V}_1; Y)$ is non-empty,
\item $NE(Y,X) \cap (\mathbb{R}[\mathscr{V}_1]+ \cdots +\mathbb{R}[\mathscr{V}_m])=\{0\}$.
\end{itemize}
Then 
$${\rm dim}\, {\rm Locus}(\mathscr{V}_m,\ldots ,\mathscr{V}_1; Y)  \geq {\rm dim}\,Y+\sum_{i=1} ^m(-K_X \cdot \mathscr{V}_i)-m.$$ 
\end{lem}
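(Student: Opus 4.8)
The plan is to prove both parts by induction on $m$, isolating two ingredients: a single-family \emph{base estimate} that accounts for the first family, and an \emph{inductive engine} measuring how much a locus grows when one more unsplit family is attached. Throughout I would write $d_i := -K_X\cdot\mathscr{V}_i$, set $W_0:=\{x\}$ in part (1) and $W_0:=Y$ in part (2), and put $W_k:={\rm Locus}(\mathscr{V}_k;W_{k-1})$, so that $W_m={\rm Locus}(\mathscr{V}_m,\ldots,\mathscr{V}_1;\,\cdot\,)$ and each $W_k$ is closed and irreducible. The asserted inequalities then read $\dim W_m\ge B+\sum_{i=1}^m(d_i-1)$, where $B={\rm codim}_X{\rm Locus}(\mathscr{V}_1)$ in part (1) and $B=\dim Y$ in part (2).

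For part (1) only the first step is delicate. Writing $p:\mathscr{U}\to\mathscr{V}_1$ and $q:\mathscr{U}\to X$ for the universal family and $L:={\rm Locus}(\mathscr{V}_1)$, I would combine the deformation bound $\dim\mathscr{V}_1\ge d_1+\dim X-3$ (from $\dim_{[f]}\mathrm{Mor}(\mathbb{P}^1,X)\ge -K_X\cdot C+\dim X$, \cite{Ko}) with the fibre-dimension inequality for $q$ onto $L$: every fibre of $q$ has dimension at least $\dim\mathscr{U}-\dim L\ge d_1+{\rm codim}_X L-2$. Since $\mathscr{V}_1$ is unsplit at $x$, the fibre $q^{-1}(x)$ is proper and $p$ is generically finite on it, giving $\dim\mathscr{V}_{1,x}\ge d_1+{\rm codim}_X L-2$; passing to $p^{-1}(\mathscr{V}_{1,x})$ (one dimension more) and using that $q$ does not contract it yields the base estimate $\dim W_1\ge{\rm codim}_X L+d_1-1$. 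This is exactly the single-family case of \cite[IV Corollary 2.6]{Ko}, the codimension term being the gain coming from $q$ having positive-dimensional fibres whenever $\mathscr{V}_1$ fails to dominate. In part (2) the first step is instead an instance of the engine below: I state that if $\mathscr{V}$ is unsplit, $W\subset X$ is irreducible, some curve of $\mathscr{V}$ meets $W$, and $[\mathscr{V}]\notin N_1(W,X)$, then $\dim{\rm Locus}(\mathscr{V};W)\ge\dim W+(-K_X\cdot\mathscr{V})-1$; the numerical hypothesis forces the attached curves to point out of $W$, so the evaluation does not collapse and the dimension grows by $d-1$ (this is \cite[IV Corollary 2.6]{Ko} and \cite[Lemma 5.4]{ACO}). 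Applying it with $\mathscr{V}=\mathscr{V}_1$, $W=Y$ is legitimate because $NE(Y,X)\cap(\mathbb{R}[\mathscr{V}_1]+\cdots+\mathbb{R}[\mathscr{V}_m])=\{0\}$ gives $[\mathscr{V}_1]\notin N_1(Y,X)$, so $\dim W_1\ge\dim Y+d_1-1$.

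The inductive step for $k\ge 2$ is identical in both parts, and this is where Lemma~\ref{num} does the essential bookkeeping. Assuming the estimate for $W_{k-1}$, I apply the engine to the unsplit family $\mathscr{V}_k$ and $W_{k-1}$, which requires $[\mathscr{V}_k]\notin N_1(W_{k-1},X)$. Applying Lemma~\ref{num} repeatedly through the construction of $W_{k-1}$ (using that $\mathscr{V}_2,\ldots,\mathscr{V}_{k-1}$ are unsplit, and in part (1) that $\mathscr{V}_1$ is unsplit at $x$, so that $N_1(\{x\},X)=0$ propagates) shows $N_1(W_{k-1},X)\subseteq N_1(W_0,X)+\mathbb{R}[\mathscr{V}_1]+\cdots+\mathbb{R}[\mathscr{V}_{k-1}]$. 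Combined with the numerical independence of $[\mathscr{V}_1],\ldots,[\mathscr{V}_m]$ (and, in part (2), the transversality hypothesis on $NE(Y,X)$), this yields $[\mathscr{V}_k]\notin N_1(W_{k-1},X)$, so the engine applies and $\dim W_k\ge\dim W_{k-1}+d_k-1$. Telescoping from the base estimate gives $\dim W_m\ge B+\sum_{i=1}^m(d_i-1)$ in each case.

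The main obstacle is not the telescoping but the two geometric inputs feeding it. First, justifying the codimension term in part (1): that $q$ has fibres of dimension at least $d_1+{\rm codim}_X L-2$ and that neither $p|_{q^{-1}(x)}$ nor $q|_{p^{-1}(\mathscr{V}_{1,x})}$ drops dimension — which is exactly where unsplitness \emph{at }$x$ is used, to keep the relevant cycles irreducible and prevent a common base point from collapsing the locus. Second, verifying at every stage that the newly attached family is numerically independent from everything already accumulated: $W_{k-1}$ may a priori carry curves that are not obviously controlled, and it is precisely Lemma~\ref{num} that confines $N_1(W_{k-1},X)$ to the span of the classes used so far, so that the numerical-independence hypothesis can be invoked to guarantee the curves of $\mathscr{V}_k$ genuinely escape.
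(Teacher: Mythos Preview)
The paper does not prove this lemma; it is quoted from \cite[IV~Corollary~2.6]{Ko} and \cite[Lemma~5.4]{ACO}, so there is no in-paper argument to compare against. Your inductive strategy---a base estimate for $\mathscr{V}_1$ followed by a single-family growth step, with Lemma~\ref{num} controlling the numerical classes accumulated at each stage---is exactly the argument of those references.

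There is, however, one genuine imprecision in your treatment of part~(2). You track linear spans, asserting $N_1(W_{k-1},X)\subseteq N_1(Y,X)+\mathbb{R}[\mathscr{V}_1]+\cdots+\mathbb{R}[\mathscr{V}_{k-1}]$, and then invoke the hypothesis $NE(Y,X)\cap(\mathbb{R}[\mathscr{V}_1]+\cdots+\mathbb{R}[\mathscr{V}_m])=\{0\}$ to deduce $[\mathscr{V}_k]\notin N_1(W_{k-1},X)$. That implication fails: the hypothesis is a \emph{cone} condition and does not prevent $N_1(Y,X)$ from meeting the span nontrivially (e.g.\ $Y$ could carry curves $C,C'$ with $[C]-[C']\in\sum\mathbb{R}[\mathscr{V}_i]$), so $[\mathscr{V}_k]$ may well lie in $N_1(Y,X)+\sum_{i<k}\mathbb{R}[\mathscr{V}_i]$. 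The repair is to propagate cones rather than spans: the clause $a\in\mathbb{Q}_{\ge 0}$ in Lemma~\ref{num} yields $NE(W_{k-1},X)\subseteq NE(Y,X)+\mathbb{R}[\mathscr{V}_1]+\cdots+\mathbb{R}[\mathscr{V}_{k-1}]$, whence $NE(W_{k-1},X)\cap\mathbb{R}[\mathscr{V}_k]=\{0\}$ by the hypothesis and numerical independence. This weaker condition already runs the engine, since any curve in ${\rm Locus}(\mathscr{V}_k;y)\cap W_{k-1}$ would have class in that intersection. A smaller point: the loci $W_k$ need not be irreducible, so one should pass to an irreducible component of maximal dimension at each stage before continuing the induction.
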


\section{Fano manifolds}

Throughout this section, let $X$ be a Fano manifold (i.e., a smooth complex projective variety with ample anti-canonical line bundle $-K_X$). 

\begin{dfn}\label{index}
We denote by $r_X$ the greatest positive integer $r$ such
that $-K_X = rH$ for some line bundle $H$, which is called the \textit{index} of $X$. 
\end{dfn}

\begin{dfn}\label{pindex}
We denote
by $i_X$ the minimum of intersection numbers of $-K_X$ with rational curves on $X$, which is
called the \textit{pseudo-index} of $X$. 
Clearly $i_X \geq r_X$ holds. 
\end{dfn}

\begin{dfn}
Let $\pi : U \rightarrow Z$ be a proper surjective morphism mapping from an open dense subvariety $U \subset X$ to a quasi-projective variety $Z$. 
We say that a family $\mathscr{V}$ of rational curves is \textit{horizontal dominating} with respect to $\pi$ if ${\rm Locus}(\mathscr{V})$ dominates $Z$ and curves of $\mathscr{V}$ are not contracted by $\pi$. 
Since $X$ is a Fano manifold, $X$ admits such a family (\cite[Theorem 2.1]{KMM}). 
If a horizontal dominating family $\mathscr{V}$ has minimal anti-canonical degree among such families, then we call it a \textit{minimal horizontal dominating family} with respect to $\pi$. 
When $\pi={\rm id}_X$, it is called a \textit{minimal dominating family}. 

Remark that a minimal horizontal dominating family with respect to $\pi$ is unsplit on a general fiber of $\pi$, so it is locally unsplit. 
In particular, any Fano manifold $X$ admits a locally unsplit dominating family of rational curves. 
\end{dfn}

\begin{const}[{cf.\ \cite[Construction 1]{NO}}]\label{const1}
Suppose that $X$ is a Fano $n$-fold with $i_X \geq 2$. 
Let $\mathscr{F}_1$ be a locally unsplit dominating family of rational curves, and $\pi_1: X  \dashrightarrow Z_1$ the rc$(\mathscr{F}_1)$-fibration. 
If ${\rm dim}\,Z_1 >0$, then let $\mathscr{F}_2$ be a minimal horizontal dominating family with respect to $\pi_1$, and $\pi_2: X  \dashrightarrow Z_2$ the rc$(\mathscr{F}_1, \mathscr{F}_2)$-fibration, and so on. 
Since ${\rm dim}\,Z_i< {\rm dim}\,Z_{i-1}$, we have ${\rm dim}\,Z_k=0$ for some integer $k$, i.e., $X$ is rc$(\mathscr{F}_1, \ldots ,\mathscr{F}_k)$-connected. 
\end{const}

\begin{rmk}[{see \cite[Lemma 4]{NO}}]\label{const1rmk}
In Construction \ref{const1}, let $F_i$ be a general fiber of $\pi_i$. 
Since $\mathscr{F}_i$ is a minimal horizontal dominating family with respect to $\pi_{i-1}$, we know: 
\begin{itemize}
\item ${\rm Locus}(\mathscr{F}_i)$ intersects $F_{i-1}$, 
\item $\mathscr{F}_i$ is unsplit on $F_{i-1}$, 
\item $[\mathscr{F}_i]$ does not belong to $N_1(F_{i-1},X)$. 
\end{itemize}
By applying Lemmas \ref{num} and \ref{dim}, we have: 
$${\rm dim}\,Z_{i-1}-{\rm dim}\,Z_i \ge {\rm dim}\, {\rm Locus}(\mathscr{F}_i;y) \ge  {\rm codim}_X {\rm Locus}(\mathscr{F}_i)-K_X \cdot \mathscr{F}_i-1,$$
where $y$ is a general point of $ {\rm Locus}(\mathscr{F}_i)$. 
This implies
$$n \ge \sum_{i=1}^k \{ {\rm codim}_X {\rm Locus}(\mathscr{F}_i)-K_X \cdot \mathscr{F}_i-1\}.$$
We also have that $\mathscr{F}_1,$ $\ldots ,$ $\mathscr{F}_k$ are numerically independent. 
\end{rmk}

\begin{lem}\label{lem0}
Let $X$ be a Fano $n$-fold with $i_X \geq 2$. 
For any unsplit family $\mathscr{V}$ of rational curves, ${\rm dim}\,{\rm Locus}(\mathscr{V}) > \frac{n}{2}$ holds. 
\end{lem}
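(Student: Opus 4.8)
The plan is to bound $\dim {\rm Locus}(\mathscr{V})$ from below by exhibiting a large subvariety swept out by chains of curves of $\mathscr{V}$. Fix a general point $x \in {\rm Locus}(\mathscr{V})$. Since $i_X \geq 2$, we have $-K_X \cdot \mathscr{V} \geq 2$, and because $\mathscr{V}$ is unsplit (hence unsplit at every point of ${\rm Locus}(\mathscr{V})$), Lemma \ref{dim}(1) applied with the single family $\mathscr{V}_1 = \mathscr{V}$ gives, for a general point $y$,
$$\dim {\rm Locus}(\mathscr{V}; y) \geq {\rm codim}_X {\rm Locus}(\mathscr{V}) - K_X \cdot \mathscr{V} - 1 \geq {\rm codim}_X {\rm Locus}(\mathscr{V}) + 1.$$
Writing $d := \dim {\rm Locus}(\mathscr{V})$, the subvariety ${\rm Locus}(\mathscr{V}; y)$ is contained in ${\rm Locus}(\mathscr{V})$ and has dimension at least $(n-d) + 1$, so we obtain $d \geq n - d + 1$, i.e. $2d \geq n+1$, which is exactly $d > \tfrac{n}{2}$.

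**The key steps in order are:** first, record that an unsplit family is automatically unsplit at each point of its locus, so the hypotheses of Lemma \ref{dim}(1) are met at a general $y \in {\rm Locus}(\mathscr{V})$; second, invoke $i_X \geq 2$ to get $-K_X \cdot \mathscr{V} \geq 2$; third, apply Lemma \ref{dim}(1) with $m=1$ to bound $\dim {\rm Locus}(\mathscr{V}; y)$; fourth, use the containment ${\rm Locus}(\mathscr{V}; y) \subseteq {\rm Locus}(\mathscr{V})$ to turn the inequality into $d \geq n - d + 1$; finally, rearrange to conclude $d > n/2$.

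**The only subtle point** — and the one I would state carefully rather than the ``main obstacle,'' since there is no real obstacle here — is the choice of the point $y$: Lemma \ref{dim}(1) requires ${\rm Locus}(\mathscr{V}; y)$ non-empty and $\mathscr{V}$ unsplit at $y$, both of which hold for $y$ a general point of ${\rm Locus}(\mathscr{V})$ precisely because $\mathscr{V}$ is unsplit. One should also note that $\dim {\rm Locus}(\mathscr{V}; y) \le \dim {\rm Locus}(\mathscr{V})$ because every curve of $\mathscr{V}$ meeting $y \in {\rm Locus}(\mathscr{V})$ lies in ${\rm Locus}(\mathscr{V})$, so iterating the Locus construction never leaves ${\rm Locus}(\mathscr{V})$. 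With these observations in place the argument is a two-line computation, so the proof will be short.
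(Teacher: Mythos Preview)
Your proposal is correct and follows essentially the same route as the paper's proof: apply Lemma~\ref{dim}(1) with $m=1$ at a point $y\in{\rm Locus}(\mathscr{V})$ to obtain $d\ge (n-d)+2-1$, then rearrange. The paper in fact states the inequality for \emph{any} $y\in{\rm Locus}(\mathscr{V})$ (which is justified since an unsplit family is unsplit at every point of its locus), so your restriction to a general $y$ is unnecessary but harmless; also, the point $x$ you introduce at the outset is never used and can be dropped.
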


\begin{proof}
Let $d$ be the dimension of ${\rm Locus}(\mathscr{V})$. 
By Lemma \ref{dim}, for any point $y \in {\rm Locus}(\mathscr{V})$, we have: 
$$d \ge {\rm dim}\,{\rm Locus}(\mathscr{V};y) \ge (n-d) +2-1.$$
This implies $d > \frac{n}{2}$. 
\end{proof}

\begin{lem}\label{Ca2}
Let $X$ be a Fano manifold $X$ with $i_X \ge 2$ which admits no unsplit dominating families of rational curves. 
Then, for any prime divisor $D \subset X$, we have $N_1(D,X)=N_1(X)$. 
\end{lem}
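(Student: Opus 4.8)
The plan is to argue by contradiction. Assume there is a prime divisor $D\subset X$ with $N_1(D,X)\subsetneq N_1(X)$. The first step is to produce from this assumption an \emph{unsplit} family of rational curves whose numerical class is not in $N_1(D,X)$. Since $X$ is Fano, $N_1(X)$ is generated by classes of rational curves (e.g.\ by the cone theorem), so some rational curve $C$ has $[C]\notin N_1(D,X)$; fix a family $\mathscr{C}$ of deformations of $C$. If $\mathscr{C}$ is not unsplit, then $\overline{\mathscr{C}}$ contains a degenerate cycle $\Gamma=\sum_j a_j\Gamma_j$ with each $\Gamma_j$ a rational curve and $\sum_j a_j\geq 2$; since $[C]=[\Gamma]=\sum_j a_j[\Gamma_j]\notin N_1(D,X)$, some $[\Gamma_j]\notin N_1(D,X)$, whereas $-K_X\cdot\Gamma_j<-K_X\cdot C$ because each $-K_X\cdot\Gamma_i\geq i_X\geq 2$. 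Replacing $C$ by such a $\Gamma_j$ and iterating, the anticanonical degree strictly decreases at each step but stays $\geq i_X$, so after finitely many steps we reach a rational curve whose family of deformations $\mathscr{U}$ is unsplit and still has $[\mathscr{U}]\notin N_1(D,X)$. I expect this degree-descent to be the step requiring the most care: one must note that every rational curve lies in some family of deformations and that a degenerate cycle really does have components of strictly smaller anticanonical degree.

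Next I would run the following geometric argument with $\mathscr{U}$. No curve of $\mathscr{U}$ lies in $D$, for its class would then be $[\mathscr{U}]\in N_1(D,X)$. By Lemma~\ref{lem0}, ${\rm dim}\,{\rm Locus}(\mathscr{U})>\frac{n}{2}$; since ${\rm dim}\,D=n-1$ and $n\geq 2$, this gives ${\rm dim}\,{\rm Locus}(\mathscr{U})+{\rm dim}\,D>n$, so ${\rm Locus}(\mathscr{U})\cap D\ne\emptyset$ and some curve of $\mathscr{U}$ meets $D$ without being contained in it. Hence $D\cdot\mathscr{U}>0$, so \emph{every} curve of $\mathscr{U}$ meets $D$, that is, ${\rm Locus}(\mathscr{U};D)={\rm Locus}(\mathscr{U})$ is non-empty. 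Now apply Lemma~\ref{dim}(2) with $m=1$, $\mathscr{V}_1=\mathscr{U}$ and $Y=D$: the hypotheses hold since ${\rm Locus}(\mathscr{U};D)\ne\emptyset$ and $NE(D,X)\cap\mathbb{R}[\mathscr{U}]=\{0\}$ (the latter because $[\mathscr{U}]\notin N_1(D,X)$). We obtain
$${\rm dim}\,{\rm Locus}(\mathscr{U};D)\geq{\rm dim}\,D+(-K_X\cdot\mathscr{U})-1\geq(n-1)+2-1=n,$$
hence ${\rm Locus}(\mathscr{U};D)=X$, so ${\rm Locus}(\mathscr{U})=X$ and $\mathscr{U}$ is dominating. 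But $\mathscr{U}$ is unsplit, contradicting the hypothesis that $X$ admits no unsplit dominating family. Therefore $N_1(D,X)=N_1(X)$ for every prime divisor $D$.

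In this scheme the assumption $i_X\geq 2$ does double duty: it makes the degree-descent terminate, and it provides the lower bound on $-K_X\cdot\mathscr{U}$ needed to invoke Lemmas~\ref{lem0} and~\ref{dim}. The no-unsplit-dominating-family hypothesis, by contrast, enters only at the last line. Once the unsplit family $\mathscr{U}$ with $[\mathscr{U}]\notin N_1(D,X)$ is in hand, Lemma~\ref{dim}(2) applied with $Y=D$ forces the locus of $\mathscr{U}$ to fill up $X$ almost immediately, which is the crux of the contradiction.
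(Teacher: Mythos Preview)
Your approach differs substantially from the paper's. The paper simply invokes Casagrande's structure theorem \cite[Theorem~1.2]{Ca2}: any Fano manifold with $i_X\geq 2$ either has $N_1(D,X)=N_1(X)$ for every prime divisor $D$, or is a $\mathbb{P}^1$-bundle over a Fano manifold $Y$ with $i_Y\geq 2$. In the latter case the family of fibers is unsplit and dominating, which is excluded by hypothesis. So the paper's proof is a two-line appeal to a nontrivial external result, whereas you attempt a self-contained argument.

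Your argument, however, has a genuine gap. The implication ``$\dim\mathrm{Locus}(\mathscr{U})+\dim D>n$, hence $\mathrm{Locus}(\mathscr{U})\cap D\neq\emptyset$'' is invalid: in an arbitrary smooth projective variety two closed subvarieties whose dimensions sum to more than $n$ can perfectly well be disjoint (think of two distinct fibers $\{p\}\times\mathbb{P}^{n-1}$ in $\mathbb{P}^1\times\mathbb{P}^{n-1}$). What you actually need is $D\cdot\mathscr{U}>0$. You correctly deduce $D\cdot\mathscr{U}\geq 0$ from the fact that no curve of $\mathscr{U}$ lies in $D$, but nothing you have written rules out $D\cdot\mathscr{U}=0$; in that case curves of $\mathscr{U}$ may avoid $D$ entirely, $\mathrm{Locus}(\mathscr{U};D)=\emptyset$, and Lemma~\ref{dim}(2) does not apply. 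Your degree-descent construction of $\mathscr{U}$ gives no control over $D\cdot\mathscr{U}$, and forcing this intersection to be positive is not a triviality --- it is precisely here that one needs deeper input along the lines of Casagrande's theorem.
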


\begin{proof}
By \cite[Theorem 1.2]{Ca2}, any Fano manifold with $i_X \ge 2$ satisfies either
\begin{itemize}
\item[(i)] $i_X=2$ and $X$ is a $\mathbb{P}^1$-bundle over a Fano manifold $Y$ with $i_Y \geq 2$, or
\item[(ii)] $N_1(D,X)=N_1(X)$ for any prime divisor $D \subset X$. 
\end{itemize}
If (i) holds, then the family of fibers is an unsplit dominating family of rational curves on $X$. 
Thus (ii) holds.  
\end{proof}

\section{Main results}

In this section, we consider Fano $6$-folds $X$ with $i_X \ge 2$ which admit no unsplit dominating families of rational curves. 
Note that the locus of any unsplit family of rational curves has dimension $4$ or $5$ by Lemma \ref{lem0}. 

\begin{prop}\label{prop1}
Let $X$ be a Fano $6$-fold with $i_X \ge 2$ which admits no unsplit dominating families of rational curves, and let $\mathscr{F}_1,$ $\ldots ,$ $\mathscr{F}_k$ be as in Construction \ref{const1}. 
Then we have one of the following: 
\begin{itemize}
\item[(a)] $k=1$,
\item[(b)] $k=2$ and $\mathscr{F}_2$ is unsplit, 
\item[(c)] $k=2$, $\mathscr{F}_2$ is non-unsplit, and $\rho_X=2$. 
\end{itemize}
Furthermore, in case {\rm (b)}, we have either
\begin{itemize}
\item[(b1)] ${\rm dim}\,{\rm Locus}(\mathscr{F}_2)=5$, and $N_1(X) = N_1(F_1,X)+ \mathbb{R}[\mathscr{F}_2]$ for a general fiber $F_1$ of  the rc$(\mathscr{F}_1)$-fibration $\pi_1$, or
\item[(b2)] ${\rm Locus}(\mathscr{F}_2,\mathscr{F}_1;x) \ne \emptyset$ for a general point $x \in X$. 
\end{itemize}
\end{prop}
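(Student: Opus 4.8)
The plan is to play the numerical inequality of Remark~\ref{const1rmk} against the structural dichotomy of Lemma~\ref{Ca2} and the dimension estimates of Lemmas~\ref{num}, \ref{dim} and \ref{lem0}. First I would bound $k$. Since $\mathscr{F}_1$ is a locally unsplit dominating family and $X$ admits no unsplit dominating family, $\mathscr{F}_1$ cannot be unsplit, so $\overline{\mathscr{F}_1}$ contains a reducible or non-reduced $1$-cycle; each of its components is a rational curve, hence $-K_X\cdot\mathscr{F}_1\ge 2i_X$. For $i\ge 2$ I claim $\mathrm{codim}_X\mathrm{Locus}(\mathscr{F}_i)-K_X\cdot\mathscr{F}_i-1\ge i_X$: if $\mathrm{Locus}(\mathscr{F}_i)\ne X$ the codimension already contributes $1$, while if $\mathrm{Locus}(\mathscr{F}_i)=X$ then $\mathscr{F}_i$ is dominating and locally unsplit, hence (by hypothesis) non-unsplit, so $-K_X\cdot\mathscr{F}_i\ge 2i_X$. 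Substituting into $6=n\ge\sum_{i=1}^{k}\{\mathrm{codim}_X\mathrm{Locus}(\mathscr{F}_i)-K_X\cdot\mathscr{F}_i-1\}$ gives $6\ge(2i_X-1)+(k-1)i_X$, which together with $i_X\ge 2$ forces $k\le 2$. If $k=1$ we are in case~(a); from now on $k=2$, and the inequality reads $6\ge(-K_X\cdot\mathscr{F}_1-1)+(\mathrm{codim}_X\mathrm{Locus}(\mathscr{F}_2)-K_X\cdot\mathscr{F}_2-1)$.

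Suppose next that $\mathscr{F}_2$ is non-unsplit. Then $-K_X\cdot\mathscr{F}_2\ge 2i_X$ as well, and the displayed inequality forces $i_X=2$, $-K_X\cdot\mathscr{F}_1=-K_X\cdot\mathscr{F}_2=4$, $\mathrm{Locus}(\mathscr{F}_2)=X$, and equality throughout the dimension chain of Remark~\ref{const1rmk}, whence $\dim Z_1=\dim F_1=3$. To conclude case~(c) I would show $\rho_X=2$: for a general $x$ one has $\dim\mathrm{Locus}(\mathscr{F}_1;x)\ge -K_X\cdot\mathscr{F}_1-1=3=\dim F_1$, so $\mathrm{Locus}(\mathscr{F}_1;x)=F_1$ and, by Lemma~\ref{num}, $N_1(F_1,X)$ is spanned by classes of components of cycles of $\overline{\mathscr{F}_1}$ through $x$; likewise, as $\mathscr{F}_2$ is unsplit on $F_1$ with $[\mathscr{F}_2]\notin N_1(F_1,X)$, a dimension count yields $\mathrm{Locus}(\mathscr{F}_2;F_1)=X$, so $N_1(X)$ is spanned by $N_1(F_1,X)$, by $[\mathscr{F}_2]$, and by classes of components of cycles of $\overline{\mathscr{F}_2}$ meeting $F_1$. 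Every such component that is not proportional to $[\mathscr{F}_1]$ or $[\mathscr{F}_2]$ has anticanonical degree $i_X=2$, hence moves in an unsplit family that is neither dominating (no unsplit dominating family) nor horizontal dominating with respect to $\pi_1$ (minimality of $\mathscr{F}_2$), and so has class in $N_1(F_1,X)+\mathbb{R}[\mathscr{F}_2]$; carrying this bookkeeping through collapses $N_1(X)$ to $\mathbb{R}[\mathscr{F}_1]+\mathbb{R}[\mathscr{F}_2]$.

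Finally suppose $\mathscr{F}_2$ is unsplit; then it is not dominating, so $\dim\mathrm{Locus}(\mathscr{F}_2)\in\{4,5\}$ by Lemma~\ref{lem0}. Fix a general $x\in X$, lying in a general fiber $F_1$ of $\pi_1$; by Remark~\ref{const1rmk}, $F_1\cap\mathrm{Locus}(\mathscr{F}_2)\ne\emptyset$. If $\mathrm{Locus}(\mathscr{F}_1;x)$ meets $\mathrm{Locus}(\mathscr{F}_2)$, then $\mathrm{Locus}(\mathscr{F}_2,\mathscr{F}_1;x)\ne\emptyset$ and we are in case~(b2). Otherwise $\mathrm{Locus}(\mathscr{F}_1;x)\subsetneq F_1$, so $\dim F_1>\dim\mathrm{Locus}(\mathscr{F}_1;x)\ge -K_X\cdot\mathscr{F}_1-1\ge 3$; hence $\dim F_1\ge 4$, $\dim Z_1\le 2$, and, combined with $\dim Z_1\ge\mathrm{codim}_X\mathrm{Locus}(\mathscr{F}_2)+(-K_X\cdot\mathscr{F}_2)-1\ge\mathrm{codim}_X\mathrm{Locus}(\mathscr{F}_2)+1$, this forces $\mathrm{codim}_X\mathrm{Locus}(\mathscr{F}_2)=1$, i.e.\ $\dim\mathrm{Locus}(\mathscr{F}_2)=5$. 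Set $D=\mathrm{Locus}(\mathscr{F}_2)$, a prime divisor, so $N_1(D,X)=N_1(X)$ by Lemma~\ref{Ca2}; it remains to prove $N_1(D,X)\subseteq N_1(F_1,X)+\mathbb{R}[\mathscr{F}_2]$. Since a general $x\in F_1$ avoids $D$, we have $F_1\not\subseteq D$ and $\dim(D\cap F_1)=\dim F_1-1$, and I would propagate along $\mathscr{F}_2$-curves starting from $D\cap F_1$ — using Lemma~\ref{dim}(2) for dimensions and Lemma~\ref{num} for numerical control — to reach all of $D$, which gives case~(b1).

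The main obstacle lies in the two $N_1$-computations. In case~(c) one must pin down the classes of the components into which a generically irreducible family can degenerate, so that $N_1(X)$ does not grow beyond $\mathbb{R}[\mathscr{F}_1]+\mathbb{R}[\mathscr{F}_2]$; in the (b1) branch one must verify that the $\mathscr{F}_2$-curves emanating from $D\cap F_1$ genuinely sweep out the whole divisor $D$ (or otherwise connect $D$ to $F_1$ numerically), since a single application of Lemma~\ref{dim}(2) only guarantees a locus of dimension $4$ inside the $5$-fold $D$. Both points are to be handled by playing the minimality built into Construction~\ref{const1} against Lemma~\ref{lem0} and the hypothesis that $X$ carries no unsplit dominating family.
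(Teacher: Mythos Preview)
Your skeleton—bounding $k$ via Remark~\ref{const1rmk} and then splitting according to whether $\mathscr{F}_2$ is unsplit—matches the paper, and your dichotomy in case~(b) is equivalent to the paper's (the paper splits on $F_1=\mathrm{Locus}(\mathscr{F}_1;x)$ versus $F_1\supsetneq\mathrm{Locus}(\mathscr{F}_1;x)$, but this amounts to the same thing). The two ``main obstacles'' you flag are, however, genuine gaps in your write-up, and the paper removes both by a simpler mechanism than the bookkeeping you propose.

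In case~(c) you record that $\mathscr{F}_2$ is unsplit on $F_1$ and then immediately worry about ``classes of components of cycles of $\overline{\mathscr{F}_2}$ meeting $F_1$''. These are incompatible: \emph{unsplit on $F_1$} means exactly that $(\mathscr{F}_2)_{F_1}$ is proper, so every cycle of $\overline{\mathscr{F}_2}$ meeting $F_1$ is already an irreducible curve of $\mathscr{F}_2$. Thus once $X=\mathrm{Locus}(\overline{\mathscr{F}_2};F_1)$ is established, Lemma~\ref{num} gives $N_1(X)=N_1(F_1,X)+\mathbb{R}[\mathscr{F}_2]$ with nothing further to track; and since $\mathscr{F}_1$ is unsplit at the general point $x$ with $F_1=\mathrm{Locus}(\mathscr{F}_1;x)$, the same lemma gives $N_1(F_1,X)=\mathbb{R}[\mathscr{F}_1]$. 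Your alternative route (a degree-$2$ component moves in an unsplit family that is not horizontal dominating, hence has class in $N_1(F_1,X)+\mathbb{R}[\mathscr{F}_2]$) does not go through: failing to be horizontal dominating only says the family either does not dominate $Z_1$ or is contracted by $\pi_1$, neither of which forces its class into $N_1(F_1,X)+\mathbb{R}[\mathscr{F}_2]$. Note also that you cannot get $\mathrm{Locus}(\mathscr{F}_2;F_1)=X$ from Lemma~\ref{dim}(2), which requires global unsplitness; the paper instead reads off from the forced equalities in Remark~\ref{const1rmk} that $\dim\mathrm{Locus}(\mathscr{F}_2;y)=\dim Z_1=3$ for general $y$, so $\mathrm{Locus}(\mathscr{F}_2;y)$ dominates $Z_1$, whence $X=\mathrm{Locus}(\overline{\mathscr{F}_2};F_1)$.

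The same device closes the gap you identify in case~(b1). The inequality chain forces $\dim\mathrm{Locus}(\mathscr{F}_2;y)=\dim Z_1=2$ for general $y\in\mathrm{Locus}(\mathscr{F}_2)$, so $\mathrm{Locus}(\mathscr{F}_2;y)$ dominates $Z_1$ and hence $\mathrm{Locus}(\mathscr{F}_2)=\mathrm{Locus}(\mathscr{F}_2;F_1)$ outright—no propagation from $D\cap F_1$ is needed, and the problem that a single application of Lemma~\ref{dim}(2) only reaches dimension $4$ never arises. Then Lemmas~\ref{Ca2} and~\ref{num} give $N_1(X)=N_1(\mathrm{Locus}(\mathscr{F}_2),X)=N_1(F_1,X)+\mathbb{R}[\mathscr{F}_2]$.
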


\begin{proof}
Let $m$ be the number of non-unsplit families among  $\mathscr{F}_1,$ $\ldots ,$ $\mathscr{F}_k$. 
Since $\mathscr{F}_1$ is dominating (so non-unsplit), we know $m \ge 1$. 
If $\mathscr{F}_i$ is non-unsplit, then 
$${\rm codim}_X {\rm Locus}(\mathscr{F}_i)-K_X \cdot \mathscr{F}_i-1 \ge 2i_X -1\ge 3.$$
On the other hand, if $\mathscr{F}_i$ is unsplit, then 
$${\rm codim}_X {\rm Locus}(\mathscr{F}_i)-K_X \cdot \mathscr{F}_i-1 \ge 1+i_X -1\ge 2.$$ 
By Remark \ref{const1rmk}, we get:
$$6 \ge 3m+2(k-m) = 2k+m,$$
thus $(k,m)$ is equal to one of $(1,1)$, $(2,1)$, or $(2,2)$. 

First, we consider the case $(k,m)=(2,2)$ (i.e., $\mathscr{F}_2$ is non-unsplit). 
Let $x$ be a general point of $X$, and $F_1$ the fiber of $\pi_1$ passing through $x$. 
Since the equality of Remark \ref{const1rmk} holds, we get: 
$${\rm dim}\,{\rm Locus}(\mathscr{F}_1;x)={\rm dim}\,F_1=3,$$
so ${\rm Locus}(\mathscr{F}_1;x)=F_1$. 
We also get that $\mathscr{F}_2$ is dominating, and for a general point $y \in X$, 
$${\rm dim}\,{\rm Locus}(\mathscr{F}_2;y)={\rm dim}\,Z_1=3,$$
so ${\rm Locus}(\mathscr{F}_2;y)$ dominates $Z_1$. 
This yields $X={\rm Locus}(\overline{\mathscr{F}_2};F_1)$ because $y$ is a general point of $X$. 
Recall that $\mathscr{F}_2$ is unsplit on $F_1$. 
By Lemma \ref{num}, we have: 
$$N_1(X)=N_1(F_1,X)+ \mathbb{R}[\mathscr{F}_2]=N_1({\rm Locus}(\mathscr{F}_1;x),X)+ \mathbb{R}[\mathscr{F}_2]=\mathbb{R}[\mathscr{F}_1]+\mathbb{R}[\mathscr{F}_2].$$
Thus (c) holds. 

Next, we consider the case $(k,m)=(2,1)$ (i.e., $\mathscr{F}_2$ is unsplit). 
Let $x$ be a general point of $X$, and $F_1$ the fiber of $\pi_1$ passing through $x$. 
If $F_1 ={\rm Locus}(\mathscr{F}_1;x)$, then 
$${\rm Locus}(\mathscr{F}_2,\mathscr{F}_1;x)={\rm Locus}(\mathscr{F}_2;F_1)\ne \emptyset,$$
i.e., (b2) holds. 
So, we suppose ${\rm dim}\,F_1 > {\rm dim}\,{\rm Locus}(\mathscr{F}_1;x)$. 
Then, by Remark \ref{const1rmk}, 
$$4 \ge 5 - {\rm codim}_X {\rm Locus}(\mathscr{F}_2) \ge 6- {\rm dim}\,{\rm Locus}(\mathscr{F}_2;y) \ge {\rm dim}\,F_1 > {\rm dim}\,{\rm Locus}(\mathscr{F}_1;x) \ge 3,$$
where $y$ is a general point of ${\rm Locus}(\mathscr{F}_2)$. 
Hence, ${\rm dim}\,{\rm Locus}(\mathscr{F}_2)=5$ and 
$${\rm dim}\,{\rm Locus}(\mathscr{F}_2;y)={\rm dim}\,Z_1=2,$$
so ${\rm Locus}(\mathscr{F}_2;y)$ dominates $Z_1$.
This yields ${\rm Locus}(\mathscr{F}_2)={\rm Locus}(\mathscr{F}_2;F_1)$. 
By applying Lemmas \ref{Ca2} and \ref{num}, we obtain: 
$$N_1(X)=N_1({\rm Locus}(\mathscr{F}_2),X)=N_1(F_1,X)+ \mathbb{R}[\mathscr{F}_2].$$
Thus (b1) holds. 
\end{proof}

\begin{rmk}
As shown later (Proposition \ref{prop5}), we can prove $\rho_X \le 4$ in case (b1). 
\end{rmk}

\begin{cor}[{Theorem \ref{main}(1)}]
Let $X$ be a Fano $6$-fold with $i_X \ge 2$ which admits no unsplit dominating families of rational curves. 
Assume that $X$ admits a contraction of fiber type. Then $\rho_X \le 2$ holds. 
\end{cor}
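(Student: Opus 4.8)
The plan is to combine Proposition \ref{prop1} with the hypothesis that $X$ admits a fiber-type contraction, and to rule out the two cases of Proposition \ref{prop1} in which $\rho_X$ could exceed $2$. Since cases (a) and (c) give $\rho_X=1$ and $\rho_X=2$ respectively, the only work is in case (b), and within case (b) we must handle both subcases (b1) and (b2). So the strategy is: assume for contradiction that $\rho_X \ge 3$, conclude we are in case (b), and derive a contradiction from the existence of a fiber-type extremal contraction $\varphi : X \to Y$.

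Let $R$ be an extremal ray of $\overline{NE}(X)$ whose contraction $\varphi$ is of fiber type, let $\mathscr{R}$ be an unsplit family of rational curves with $[\mathscr{R}] \in R$ (such a family exists by standard results on extremal rays of Fano manifolds), and let $F$ denote a general fiber of $\varphi$, so $\dim F = 6 - \dim Y \ge 1$ and $N_1(F,X) = R$ is one-dimensional. The key numerical input is Lemma \ref{lem0}: since $\mathscr{R}$ is unsplit, $\dim \operatorname{Locus}(\mathscr{R}) \ge 4$; but $\operatorname{Locus}(\mathscr{R})$ is contained in a fiber of $\varphi$ (curves in $R$ are contracted), so in fact $\dim F \ge 4$, forcing $\dim Y \le 2$. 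First I would dispose of subcase (b2): there $\operatorname{Locus}(\mathscr{F}_2,\mathscr{F}_1;x) \ne \emptyset$ for general $x$, and by the dimension estimate in Remark \ref{const1rmk} this locus has dimension at least $(-K_X \cdot \mathscr{F}_1) + (-K_X \cdot \mathscr{F}_2) - 2 + \operatorname{codim}_X\operatorname{Locus}(\mathscr{F}_1) \ge 2 + 2 - 2 + 0$; I would instead use that in (b2) both $\mathscr{F}_1$ (locally unsplit, generically unsplit at $x$) and $\mathscr{F}_2$ (unsplit) can be used, together with $\rho_X \ge 3$, to produce a third numerically independent unsplit family, contradicting $6 \ge 2k+m = 5$ from the proof of Proposition \ref{prop1} — or more directly, observe that in (b2) the chain $\mathscr{F}_2,\mathscr{F}_1$ reaches a general point, so $N_1(X)$ is spanned by $[\mathscr{F}_1],[\mathscr{F}_2]$ by Lemmas \ref{num} and \ref{rcc}, giving $\rho_X \le 2$.

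For subcase (b1), where $N_1(X) = N_1(F_1,X) + \mathbb{R}[\mathscr{F}_2]$ with $F_1$ a general fiber of the rc$(\mathscr{F}_1)$-fibration and $\dim\operatorname{Locus}(\mathscr{F}_2)=5$, I would bring in the fiber-type contraction $\varphi$ as follows. Since $\dim F \ge 4$ (from the argument above) and $F$ is a fiber of a contraction, $N_1(F,X)=R$ has dimension $1$; on the other hand $\operatorname{Locus}(\mathscr{F}_2)$ is a divisor (dimension $5$), so by Lemma \ref{Ca2} we have $N_1(\operatorname{Locus}(\mathscr{F}_2),X)=N_1(X)$. The plan is then to intersect: a general fiber $F$ of $\varphi$ meets the divisor $\operatorname{Locus}(\mathscr{F}_2)$ in a subvariety of dimension $\ge \dim F + 5 - 6 \ge 3$, and I would try to run the Locus-building machinery of Lemma \ref{dim} starting from $F$ using the family $\mathscr{F}_2$ (unsplit) to show that such loci fill up $X$ while staying numerically confined to $N_1(F,X) + \mathbb{R}[\mathscr{F}_2] = R + \mathbb{R}[\mathscr{F}_2]$, forcing $\rho_X \le 2$. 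The main obstacle I anticipate is exactly this last step: controlling $N_1$ of the iterated loci requires knowing that $\mathscr{F}_1$ (which is non-unsplit) does not introduce extra classes, so I would need to argue that the rc$(\mathscr{F}_1)$-fibration is compatible with $\varphi$ — e.g. that $R$ is not contained in the "vertical" classes of $\pi_1$, or conversely that fibers of $\pi_1$ are contracted by $\varphi$ — and combine this with the fact that $\varphi$ being of fiber type means $\dim Y < 6$ forces $\rho_Y < \rho_X$, then induct or directly bound $\rho_Y \le 1$ using that $Y$ has a dominating family of low dimension. In practice I expect case (b1) to reduce, after the intersection argument, to showing that $X$ is rc$(\mathscr{F}_2,\mathscr{R})$-connected, whence $\rho_X \le 2$ by Lemma \ref{rcc}; verifying the hypotheses of that lemma (quasi-unsplitness of the families involved, and rc-connectedness) is the delicate part.
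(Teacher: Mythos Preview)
Your approach misses the single idea that makes the paper's proof a one-paragraph argument: \emph{choose the initial family $\mathscr{F}_1$ of Construction~\ref{const1} to lie in the fiber-type extremal ray $R$}. Since the contraction of $R$ is of fiber type, there is a dominating family of rational curves whose numerical class lies in $R$; taking one of minimal anti-canonical degree gives a locally unsplit $\mathscr{F}_1$. Because $R$ is extremal, every irreducible component of every cycle in $\overline{\mathscr{F}_1}$ again has class in $R$, so $\mathscr{F}_1$ is \emph{quasi-unsplit}. Now run Proposition~\ref{prop1} with this $\mathscr{F}_1$: in case (c) one has $\rho_X=2$ directly; in cases (a) and (b) the families $\mathscr{F}_1,\ldots,\mathscr{F}_k$ (with $k\le 2$) are all quasi-unsplit, and Lemma~\ref{rcc} immediately gives $\rho_X\le k\le 2$. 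No separate treatment of (b1) versus (b2) is needed.

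By contrast, you take an arbitrary $\mathscr{F}_1$ and a second, unrelated unsplit family $\mathscr{R}$ in $R$, and then try to force (b1) and (b2) to collapse. Several steps break down. First, the assertion ``$\operatorname{Locus}(\mathscr{R})$ is contained in a fiber of $\varphi$'' is false: each curve of $\mathscr{R}$ lies in some fiber, but the locus can spread across many fibers; the bound $\dim F\ge 4$ does not follow. Second, in case (b2) you claim that $\operatorname{Locus}(\mathscr{F}_2,\mathscr{F}_1;x)\ne\emptyset$ for general $x$ forces $N_1(X)=\mathbb{R}[\mathscr{F}_1]+\mathbb{R}[\mathscr{F}_2]$; but that locus being non-empty is far from it being all of $X$, and Lemma~\ref{rcc} does not apply because your $\mathscr{F}_1$ is only locally unsplit, not quasi-unsplit. (That (b2) genuinely requires further work when $\mathscr{F}_1$ is not chosen in $R$ is exactly why the paper later proves Proposition~\ref{prop4}.) Finally, your treatment of (b1) is, as you yourself note, speculative. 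The fix is not to patch these arguments but to make the right choice of $\mathscr{F}_1$ at the outset.
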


\begin{proof}
Let $R$ be an extremal ray of fiber type. 
Then there exits a dominating family $\mathscr{F}_1$ of rational curves such that $[\mathscr{F}_1] \in R$. 
Suppose that $\mathscr{F}_1$ has minimal anti-canonical degree among such families. 
Then $\mathscr{F}_1$ is locally unsplit. 
Since $R=\mathbb{R}_{\ge 0} [\mathscr{F}_1]$ is an extremal ray, we also have that $\mathscr{F}_1$ is quasi-unsplit. 
Let $\mathscr{F}_2,$ $\ldots ,$ $\mathscr{F}_k$ be as in Construction \ref{const1}. 
According to Proposition \ref{prop1}, we may assume either (a) $k=1$ or (b) $k=2$ and $\mathscr{F}_2$ is unsplit. 
By applying Lemma \ref{rcc}, we obtain that $\rho_X=1$ in case (a), and that $\rho_X =2$ in case (b). 
\end{proof}

\begin{lem}\label{lem1}
Let $X$ be a Fano $6$-fold with $i_X \ge 2$ which admits no unsplit dominating families of rational curves. 
Let $\mathscr{H}_1, \mathscr{H}_2, \mathscr{H}_3$ be numerically independent unsplit families of rational curves. 
Set $H_i:={\rm Locus}(\mathscr{H}_i)$. 
\begin{enumerate}
\item If ${\rm dim}\,H_1=4$ and there exists a connected chain parametrized by $(\mathscr{H}_1, \mathscr{H}_2, \mathscr{H}_3)$ (see Definition \ref{chain}), then $\rho_X =3$. 
\item If ${\rm dim}\,H_1=5$, ${\rm dim}\,H_2=4$, $H_1 \cdot \mathscr{H}_2 >0$, and $H_2$ intersects $H_3$, then $\rho_X =3$.
 \item If ${\rm dim}\,H_1={\rm dim}\,H_2 =5$, $H_1 \cdot \mathscr{H}_2 >0$, and $H_2 \cdot \mathscr{H}_3 >0$, then $\rho_X \le 4$.
\end{enumerate}
\end{lem}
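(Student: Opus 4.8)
The plan is to prove all three parts by the same mechanism: build a subvariety $W$ by taking iterated $\mathrm{Locus}$ of the three unsplit families starting from a suitable seed, use Lemma \ref{dim} to force $\dim W$ up to $6$ (so $W = X$), and then use Lemma \ref{num} together with Lemma \ref{Ca2} to conclude that $N_1(X)$ is spanned by $[\mathscr{H}_1], [\mathscr{H}_2], [\mathscr{H}_3]$, giving $\rho_X \le 3$; in cases (1) and (2) the hypotheses will in addition force numerical independence to be preserved along the way, so that $\rho_X = 3$. For part (1): since there is a connected chain parametrized by $(\mathscr{H}_1,\mathscr{H}_2,\mathscr{H}_3)$, the three loci are linked, and I would first pass to a general point $x$ of $H_1$ at which $\mathscr{H}_1$ is unsplit, apply Lemma \ref{dim}(1) to get $\dim \mathrm{Locus}(\mathscr{H}_1;x) \ge (n - \dim H_1) + i_X - 1 \ge 2 + 2 - 1 = 3$, hence (since $\dim H_1 = 4$) $\mathrm{Locus}(\mathscr{H}_1;x)$ is a divisor in $H_1$; then bootstrap with $\mathscr{H}_2$ and $\mathscr{H}_3$ via Lemma \ref{dim}(2), using at each stage that the relevant $NE(\,\cdot\,,X)$ meets $\mathbb{R}[\mathscr{H}_1]+\mathbb{R}[\mathscr{H}_2]+\mathbb{R}[\mathscr{H}_3]$ only in $0$ — the connecting-chain hypothesis is exactly what guarantees the loci actually meet so the iterated $\mathrm{Locus}$ is nonempty. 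Adding up the dimension increments $i_X - 1 \ge 1$ three times on top of $\dim H_1 = 4$ lands at $\ge 6$ once one is slightly careful, forcing $W = X$.

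For part (2): here $\dim H_1 = 5$ is already a divisor, and the condition $H_1 \cdot \mathscr{H}_2 > 0$ means curves of $\mathscr{H}_2$ are not contained in $H_1$, so $\mathrm{Locus}(\mathscr{H}_2; H_1)$ strictly contains $H_1$, i.e.\ equals $X$ by irreducibility and dimension count; combined with $H_2$ meeting $H_3$, I would run the chain $H_3 \rightsquigarrow \mathrm{Locus}(\mathscr{H}_2;\cdot) \rightsquigarrow \mathrm{Locus}(\mathscr{H}_1;\cdot)$, using Lemma \ref{num} to express the class of an arbitrary curve in the final locus as a combination of $[\mathscr{H}_1],[\mathscr{H}_2]$ and a curve inside $H_3$, and then Lemma \ref{num} once more to absorb $H_3$ into $[\mathscr{H}_3]$ plus a curve in $H_2 \cap H_3$, iterating until everything reduces to the three classes. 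For part (3): the asymmetry is that now both $H_1, H_2$ are divisors and only $H_2 \cdot \mathscr{H}_3 > 0$ is assumed (no link to an $H_3$-point is given separately), so the argument yields $N_1(X) = N_1(H_3, X) + \mathbb{R}[\mathscr{H}_1] + \mathbb{R}[\mathscr{H}_2]$, and then invoking Lemma \ref{Ca2} to replace $N_1(H_3,X)$ — wait, $H_3$ need not be a divisor, so instead I would use that $\mathrm{Locus}(\mathscr{H}_3)$-curves reduce classes modulo $[\mathscr{H}_3]$ into $N_1(H_3 \cap \{\text{something}\})$; since I only get that $N_1(X)$ is spanned by $N_1(H_3,X)$ together with $\mathbb{R}[\mathscr{H}_1]+\mathbb{R}[\mathscr{H}_2]+\mathbb{R}[\mathscr{H}_3]$ and $H_3$ has dimension $\le 5$, the bound is only $\rho_X \le 1 + 3 = 4$ rather than equality.

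The main obstacle I anticipate is case (1), specifically checking that the numerical-triviality hypothesis $NE(Y,X) \cap (\mathbb{R}[\mathscr{H}_1]+\mathbb{R}[\mathscr{H}_2]+\mathbb{R}[\mathscr{H}_3]) = \{0\}$ needed to apply Lemma \ref{dim}(2) actually holds at each stage of the bootstrap — a priori the intermediate locus could contain a curve numerically in the span of the $[\mathscr{H}_i]$, which would both break the dimension estimate and is hard to exclude without knowing more. The resolution should be that if it \emph{does} contain such a curve, then one already has $N_1$ of that locus inside the span, and one can short-circuit to the conclusion by a direct application of Lemma \ref{num} rather than continuing the dimension induction; so the proof will naturally split into a "generic" branch where the dimension count closes and a "degenerate" branch where the numerical conclusion is immediate. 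A secondary subtlety is keeping track of exactly when one gets $\rho_X = 3$ (equality) versus $\rho_X \le 3$: equality in (1) and (2) should follow because the three families, being numerically independent, certainly give $\rho_X \ge 3$, while the reduction gives $\rho_X \le 3$.
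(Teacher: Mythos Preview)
Your overall mechanism is right, and it matches the paper's: build an iterated $\mathrm{Locus}$ of the three unsplit families, use Lemma~\ref{dim} to push its dimension up, then invoke Lemmas~\ref{num} and~\ref{Ca2}. But several of the concrete steps are off. First, the target dimension is $5$, not $6$: from a point $y$ on the chain, Lemma~\ref{dim}(1) gives $\dim\mathrm{Locus}(\mathscr{H}_3,\mathscr{H}_2,\mathscr{H}_1;y)\ge \mathrm{codim}\,H_1 + 3\cdot 2 - 3 = 5$, and that is exactly enough because Lemma~\ref{Ca2} says $N_1(D,X)=N_1(X)$ for any prime divisor $D$. Your ``three increments on top of $\dim H_1=4$'' overcounts: you cannot seed Lemma~\ref{dim}(2) with $Y=H_1$ since $NE(H_1,X)$ already contains $[\mathscr{H}_1]$.

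The more serious gaps are in (2) and (3). In (2), your claim that $H_1\cdot\mathscr{H}_2>0$ forces $\mathrm{Locus}(\mathscr{H}_2;H_1)=X$ is wrong: that intersection condition says every $\mathscr{H}_2$-curve meets $H_1$, so $\mathrm{Locus}(\mathscr{H}_2;H_1)=H_2$, which has dimension $4$. The paper instead splits on whether $N_1(H_2,X)\subset\mathbb{R}[\mathscr{H}_1]+\mathbb{R}[\mathscr{H}_2]$. If yes, Lemma~\ref{dim}(2) gives $\dim\mathrm{Locus}(\mathscr{H}_3;H_2)\ge 5$ directly. If no, one picks a curve $C\subset H_2$ with $[C]\notin\mathbb{R}[\mathscr{H}_1]+\mathbb{R}[\mathscr{H}_2]$ and lets $y$ vary over $C$: each $\mathrm{Locus}(\mathscr{H}_1,\mathscr{H}_2;y)$ has dimension $\ge 4$, and the union over $y\in C$ must have dimension $\ge 5$ (else some $\mathrm{Locus}(\mathscr{H}_2,\mathscr{H}_1;z)$ would contain $C$, forcing $[C]\in\mathbb{R}[\mathscr{H}_1]+\mathbb{R}[\mathscr{H}_2]$). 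This ``sweep over a curve'' trick is the missing idea. In (3), you cannot apply Lemma~\ref{Ca2} to $H_3$ unless it is a divisor; the paper first observes that if $\dim H_3=4$ then part~(1) applies with the roles of $\mathscr{H}_1$ and $\mathscr{H}_3$ swapped, so one may assume $\dim H_3=5$, get $N_1(H_3,X)=N_1(X)$, choose $C\subset H_3$ with $[C]$ outside the span of the three classes, and repeat the sweep-over-$C$ argument to produce a $5$-dimensional locus with $N_1$ spanned by $[\mathscr{H}_1],[\mathscr{H}_2],[\mathscr{H}_3],[C]$.
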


\begin{proof}
In case (1), let $\{ C_i\}_{i=1}^3$ be a connected chain parametrized by $(\mathscr{H}_1, \mathscr{H}_2, \mathscr{H}_3)$, and take a point $y \in C_1$. 
Since ${\rm Locus}(\mathscr{H}_3, \mathscr{H}_2, \mathscr{H}_1;y)$ is non-empty, Lemma \ref{dim} yields: 
$${\rm dim}\,{\rm Locus}(\mathscr{H}_3, \mathscr{H}_2, \mathscr{H}_1;y) \ge 2+2+2+2-3=5.$$
Thus, by applying Lemmas \ref{Ca2} and \ref{num}, we obtain that $N_1(X)$ is generated by $[\mathscr{H}_1]$, $[\mathscr{H}_2]$, $[\mathscr{H}_3]$. 

Next, we prove (2). 
If  $N_1(H_2,X)$ is contained in $\mathbb{R}[\mathscr{H}_1]+\mathbb{R}[\mathscr{H}_2]$ (in particular, $[\mathscr{H}_3] \not\in N_1(H_2,X)$), then Lemma \ref{dim} implies: 
$${\rm dim}\,{\rm Locus}(\mathscr{H}_3;H_2) \ge 4+2-1=5.$$
Hence, Lemmas \ref{Ca2} and \ref{num} give that $N_1(X)$ is generated by $[\mathscr{H}_1]$, $[\mathscr{H}_2]$, $[\mathscr{H}_3]$. 

If $N_1(H_2,X)$ is not contained in  $\mathbb{R}[\mathscr{H}_1]+\mathbb{R}[\mathscr{H}_2]$, then there exists an irreducible curve $C \subset H_2$ such that $[C]$ does not belong to $\mathbb{R}[\mathscr{H}_1]+\mathbb{R}[\mathscr{H}_2]$. 
Since $H_1 \cdot \mathscr{H}_2 >0$, for any point $y \in C$, we know that ${\rm Locus}(\mathscr{H}_1, \mathscr{H}_2;y)$ is non-empty, so
$${\rm dim}\,{\rm Locus}(\mathscr{H}_1,\mathscr{H}_2;y) \ge 2+2+2-2=4$$
by Lemma \ref{dim}. 
This implies: 
$${\rm dim}\,{\rm Locus}(\mathscr{H}_1,\mathscr{H}_2;C) = {\rm dim}\,\bigcup_{y \in C}{\rm Locus}(\mathscr{H}_1,\mathscr{H}_2;y)\ge 5.$$
Indeed, if this locus also has dimension $4$, then there exists a point $z$ of this locus such that ${\rm Locus}(\mathscr{H}_2,\mathscr{H}_1;z)$ contains $C$, so $[C]$ must belong to $\mathbb{R}[\mathscr{H}_1]+\mathbb{R}[\mathscr{H}_2]$ by Lemma \ref{num}. 
It follows from Lemmas \ref{Ca2} and \ref{num} that $N_1(X)$ is generated by $[\mathscr{H}_1]$, $[\mathscr{H}_2]$, $[C]$. 

Finally, we prove (3). 
We can exchange the role of  $\mathscr{H}_1$ and $\mathscr{H}_3$ in (1), so we may assume ${\rm dim}\,H_3=5$. 
Then $N_1(H_3,X)=N_1(X)$ by Lemma \ref{Ca2}. 
If $\rho_X \ge 4$, then there exists an irreducible curve $C \subset H_3$ such that $[C]$ does not belong to $\mathbb{R}[\mathscr{H}_1]+\mathbb{R}[\mathscr{H}_2]+\mathbb{R}[\mathscr{H}_3]$. 
Since $H_1 \cdot \mathscr{H}_2 >0$ and $H_2 \cdot \mathscr{H}_3 >0$, for any point $y \in C$, we have that ${\rm Locus}(\mathscr{H}_1, \mathscr{H}_2, \mathscr{H}_3;y)$ is non-empty, so
$${\rm dim}\,{\rm Locus}(\mathscr{H}_1,\mathscr{H}_2, \mathscr{H}_3;y) \ge 1+2+2+2-3=4.$$
This implies: 
$${\rm dim}\,{\rm Locus}(\mathscr{H}_1,\mathscr{H}_2,\mathscr{H}_3;C) = {\rm dim}\,\bigcup_{y \in C}{\rm Locus}(\mathscr{H}_1,\mathscr{H}_2,\mathscr{H}_3;y)\ge 5.$$
Therefore, $N_1(X)$ is generated by $[\mathscr{H}_1]$, $[\mathscr{H}_2]$, $[\mathscr{H}_3]$, $[C]$. 
\end{proof}

\begin{const}\label{const2}
Let $X$ be a Fano $n$-fold. 
Assume that there exists a locally unsplit dominating family $\mathscr{F}$ of rational curves on $X$ such that $-K_X\cdot \mathscr{F} < 3i_X$ (in particular, any reducible cycle parametrized by $\overline{\mathscr{F}}$ has just two irreducible components). 
Let $\pi: X  \dashrightarrow Z$ be the rc$(\mathscr{F})$-fibration, $x \in X$ a general point, $F$ the fiber of $\pi$ passing through $x$, and $\rho$ the dimension of the vector space $N_1(F,X)$. 
Note that 
$${\rm Locus}(\overline{\mathscr{F}};x)\subset {\rm Locus}_2(\overline{\mathscr{F}};x)\subset \cdots \subset {\rm Locus}_m(\overline{\mathscr{F}};x)=F$$
for some integer $m$. 
We take integers $1 \le m_1 \le \cdots \le m_{\rho-1} < m$ and pairs of families $(\mathscr{V}_1,\mathscr{W}_1),$ $\ldots ,$ $(\mathscr{V}_{\rho-1},\mathscr{W}_{\rho-1})$ as follows: 

First, if $\rho \ge 2$, then let $m_1$ be the greatest integer such that
$$N_1({\rm Locus}_{m_1}(\overline{\mathscr{F}};x),X) \subset \mathbb{R}[\mathscr{F}].$$
Since
$$N_1({\rm Locus}_{m_1+1}(\overline{\mathscr{F}};x),X) \not\subset \mathbb{R}[\mathscr{F}],$$
Lemma \ref{num} implies that there exists a reducible cycle $C_1+C_1'$ parametrized by $\overline{\mathscr{F}}$ such that
\begin{itemize}
\item $x$ and $C_1$ can be connected by the $m_1$-tuple $(\overline{\mathscr{F}},\ldots , \overline{\mathscr{F}})$ (see Definition \ref{chain}), 
\item neither $[C_1]$ nor $[C_1']$ belongs to $\mathbb{R}[\mathscr{F}]$. 
\end{itemize}
Let $\mathscr{V}_1$ (resp.\ $\mathscr{W}_1$) be a family of deformations of $C_1$ (resp.\ $C_1'$). 

Next, if $\rho \ge 3$, then let $m_2$ be the greatest integer such that
$$N_1({\rm Locus}_{m_2}(\overline{\mathscr{F}};x),X) \subset \mathbb{R}[\mathscr{F}]+\mathbb{R}[\mathscr{V}_1].$$
Since
$$N_1({\rm Locus}_{m_2+1}(\overline{\mathscr{F}};x),X) \not\subset \mathbb{R}[\mathscr{F}]+\mathbb{R}[\mathscr{V}_1],$$
Lemma \ref{num} gives a reducible cycle $C_2+C_2'$ parametrized by $\overline{\mathscr{F}}$ such that
\begin{itemize}
\item $x$ and $C_2$ can be connected by the $m_2$-tuple $(\overline{\mathscr{F}},\ldots , \overline{\mathscr{F}})$, 
\item neither $[C_2]$ nor $[C_2']$ belongs to $\mathbb{R}[\mathscr{F}]+\mathbb{R}[\mathscr{V}_1]$. 
\end{itemize}
Let $\mathscr{V}_2$ (resp.\ $\mathscr{W}_2$) be a family of deformations of $C_2$ (resp.\ $C_2'$), and so on. 

Set $V_i:={\rm Locus}(\mathscr{V}_i)$ and $W_i:={\rm Locus}(\mathscr{W}_i)$. 
By construction, we know: 
\begin{enumerate}
\item $V_i \cap W_i \ne \emptyset$, 
\item $\mathscr{V}_i$ and $\mathscr{W}_i$ are unsplit, 
\item $[\mathscr{V}_i]+[\mathscr{W}_i]=[\mathscr{F]}$, 
\item $\mathscr{F},$ $\mathscr{V}_1,$ $\ldots ,$ $\mathscr{V}_{\rho-1}$ are numerically independent, 
\item $N_1(F,X)$ is generated by $[\mathscr{F}],$ $[\mathscr{V}_1],$ $\ldots ,$ $[\mathscr{V}_{\rho-1}]$. 
\end{enumerate}
\end{const}

\begin{prop}\label{prop2}
Let $X$ be a Fano $6$-fold with $\rho_X \ge 3$ and $i_X \ge 2$ which admits no unsplit dominating families of rational curves. 
Let $\mathscr{F}$ be a locally unsplit dominating family of rational curves. 
Let $(\mathscr{V}_1,\mathscr{W}_1),$ $\ldots ,$ $(\mathscr{V}_{\rho-1},\mathscr{W}_{\rho-1})$ be as in Construction \ref{const2}. 
Assume that $\rho \ge 3$ and ${\rm dim}\,V_i=5$ for some $i$. 
Then $\rho_X \le 4$ holds. 
\end{prop}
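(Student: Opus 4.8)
The plan is to reduce to Lemma~\ref{lem1}, whose item (3) yields exactly the bound $\rho_X \le 4$. Fix the index $i$ with $\dim V_i = 5$; then $V_i$ is a prime divisor, so Lemma~\ref{Ca2} gives $N_1(V_i,X)=N_1(X)$. Recall from Construction~\ref{const2} that $\mathscr{V}_i$ and $\mathscr{W}_i$ are unsplit, $V_i \cap W_i \ne \emptyset$, and $[\mathscr{V}_i] + [\mathscr{W}_i] = [\mathscr{F}]$; since $\rho \ge 3$ there is a further pair $(\mathscr{V}_j,\mathscr{W}_j)$ with $j \ne i$, and feeding $[\mathscr{W}_i] = [\mathscr{F}] - [\mathscr{V}_i]$ into any numerical relation shows that $\mathscr{V}_i, \mathscr{W}_i, \mathscr{V}_j$ are numerically independent (equivalently $\mathscr{F},\mathscr{V}_i,\mathscr{V}_j$ are). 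Thus I have three numerically independent unsplit families, one of which, $\mathscr{V}_i$, has a $5$-dimensional locus, and it remains to rearrange them into a triple $(\mathscr{H}_1,\mathscr{H}_2,\mathscr{H}_3)$ meeting the hypotheses of one of the items of Lemma~\ref{lem1}.

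The argument then splits according to $\dim W_i$. If $\dim W_i = 5$, then $W_i$ is also a prime divisor with $N_1(W_i,X)=N_1(X)$, and I would first pin down the signs of $V_i \cdot \mathscr{W}_i$ and $W_i \cdot \mathscr{V}_i$: since $\mathscr{F}$ is dominating we have $V_i \cdot \mathscr{F} \ge 0$ and $W_i \cdot \mathscr{F} \ge 0$, which together with $V_i \cap W_i \ne \emptyset$ and $[\mathscr{F}]=[\mathscr{V}_i]+[\mathscr{W}_i]$ should force one of these intersection numbers to be positive (the remaining configuration, in which every $\mathscr{W}_i$-curve through $V_i\cap W_i$ lies in $V_i$ and symmetrically, is degenerate and I would dispose of it directly by noting that then the two families restrict to covering families of $V_i\cap W_i$, which bounds $N_1$). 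Combining this with the fact that a general $\mathscr{V}_j$-curve is not contained in the divisor $V_i$ (or $W_i$) unless $V_j$ itself lies inside it, I would produce the two positivity conditions $H_1 \cdot \mathscr{H}_2 > 0$ and $H_2 \cdot \mathscr{H}_3 > 0$ required by Lemma~\ref{lem1}(3), with $\{\mathscr{H}_1,\mathscr{H}_2\} = \{\mathscr{V}_i,\mathscr{W}_i\}$ and $\mathscr{H}_3 = \mathscr{V}_j$, and conclude $\rho_X \le 4$. If $\dim W_i = 4$, I would instead aim at Lemma~\ref{lem1}(2) with $H_1=V_i$, $H_2=W_i$, $H_3=V_j$: the relation $[\mathscr{F}]=[\mathscr{V}_i]+[\mathscr{W}_i]$ together with $V_i\cap W_i\ne\emptyset$ should again give $V_i\cdot\mathscr{W}_i>0$, and if $W_i$ fails to meet $V_j$ I would exchange $\mathscr{V}_j$ for $\mathscr{W}_j$ or for another $\mathscr{V}_\ell$, or prolong by a short $\mathscr{F}$-chain from $W_i$ to one of the $V_\ell$, so that Lemma~\ref{lem1}(2) applies and in fact $\rho_X = 3$.

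The main obstacle is precisely this last layer of bookkeeping. Lemma~\ref{lem1} carries all the geometric weight — the dimension estimates through Lemma~\ref{dim} and the use of $N_1(D,X)=N_1(X)$ for prime divisors $D$ through Lemma~\ref{Ca2} — so the real work in Proposition~\ref{prop2} is to verify the positivity of the relevant intersection numbers $H_a\cdot\mathscr{H}_b$ and the non-emptiness of the relevant intersections of loci, and to isolate and settle by hand the handful of degenerate configurations (two of the divisors coinciding, or a $\mathscr{V}_j$-curve being absorbed into $V_i$ or $W_i$) in which these fail; in each such configuration one expects $V_i\cap W_i$ or $V_i\cap V_j$ to be forced so large that the curve families restrict to covering families of it, which again bounds $N_1(X)$. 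Once $N_1(X)$ is seen to be generated by at most four classes, $\rho_X \le 4$ is immediate from the definition of $\rho_X$.
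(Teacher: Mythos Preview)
Your reduction to Lemma~\ref{lem1} is the right endgame, but the proposal has a genuine gap: you never actually establish the positivity conditions $H_1\cdot\mathscr{H}_2>0$ and $H_2\cdot\mathscr{H}_3>0$ that Lemma~\ref{lem1} requires. Saying that $V_i\cap W_i\ne\emptyset$ together with $[\mathscr{F}]=[\mathscr{V}_i]+[\mathscr{W}_i]$ ``should force'' $V_i\cdot\mathscr{W}_i>0$ is not enough: a $\mathscr{W}_i$-curve through a point of $V_i\cap W_i$ may well be contained in $V_i$. You label this a ``degenerate configuration'' and propose to handle it by observing that the families then cover $V_i\cap W_i$, but you do not carry this out, and it is not clear how that observation alone bounds $N_1(X)$ by four classes. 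The same objection applies to your use of $\mathscr{V}_j$: nothing in your argument explains why $W_i\cdot\mathscr{V}_j>0$ or $V_i\cdot\mathscr{V}_j>0$.

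The paper obtains positivity by a different mechanism that you have overlooked. The point of Construction~\ref{const2} is not only that $V_i\cap W_i\ne\emptyset$, but that the general point $x$ is connected to $V_i$ by an $m_i$-chain of cycles from $\overline{\mathscr{F}}$, all of whose components have class in $\mathbb{R}[\mathscr{F}]+\mathbb{R}[\mathscr{V}_1]+\cdots+\mathbb{R}[\mathscr{V}_{i-1}]$. Since $x\notin V_i$, some curve $C$ in this chain crosses the divisor $V_i$, giving $V_i\cdot C>0$ with $[C]$ in that controlled span. Decomposing via $[\mathscr{F}]=[\mathscr{V}_j]+[\mathscr{W}_j]$ then yields $V_i\cdot\mathscr{V}_j>0$ or $V_i\cdot\mathscr{W}_j>0$ for some $j<i$. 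This is why the paper's case division is on whether the \emph{smallest} $i$ with $\dim V_i=5$ is $1$ or at least $2$ (so that a $V_j$ with $\dim V_j=4$ is available), not on $\dim W_i$. When $i=1$ the same chain argument gives $V_1\cdot\mathscr{F}>0$ directly, and one then plays the same game with $V_2$; only in the sub-sub-case $\dim V_2=\dim W_1=5$ with $V_2\cdot\mathscr{W}_1>0$ and $W_1\cdot\mathscr{V}_1>0$ does Lemma~\ref{lem1}(3) enter, and even there the needed positivity $W_1\cdot\mathscr{V}_1>0$ comes from a chain argument (connecting $x$ to $W_1$ through $(\overline{\mathscr{F}},\ldots,\overline{\mathscr{F}},\mathscr{V}_1)$), not from $V_1\cap W_1\ne\emptyset$. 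Your outline does not exploit this chain structure, and without it the positivity hypotheses of Lemma~\ref{lem1} remain unverified.
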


\begin{rmk}\label{prop2rmk}
Under the assumption of Proposition \ref{prop2}, $-K_X\cdot \mathscr{F} \le 5 <3i_X$ holds. 
Indeed, if $-K_X\cdot \mathscr{F} \ge 6$, then Lemma \ref{dim} yields: 
$${\rm dim}\,{\rm Locus}(\mathscr{F};x) \ge 6-1 =5,$$
so $N_1(X)$ must be generated by only $[\mathscr{F}]$ according to Lemmas \ref{Ca2} and \ref{num}. 
\end{rmk}

\begin{proof}[Proof of Proposition \ref{prop2}]
We use the notation of Construction \ref{const2}. 
Note that $D\cdot \mathscr{F} \ge 0$ for any effective divisor $D$ on $X$ because $\mathscr{F}$ is a dominating family. 

\begin{case1}
${\rm dim}\,V_1=5$. 
\end{case1}

By construction, $x$ and $V_1$ can be connected by the $m_1$-tuple $(\overline{\mathscr{F}},\ldots , \overline{\mathscr{F}})$. 
Since $x$ is a general point of $X$, we may assume $x \not\in V_1$. 
Hence, we get a rational curve $C \subset {\rm Locus}_{m_1}(\overline{\mathscr{F}};x)$ such that $V_1\cdot C >0$. 
This implies $V_1\cdot \mathscr{F}>0$. 

\begin{case12}
${\rm dim}\,V_2=4$. 
\end{case12}

Since $[\mathscr{F}]=[\mathscr{V}_2]+[\mathscr{W}_2]$, we obtain either 
\begin{itemize}
\item[(i)] $V_1\cdot \mathscr{V}_2>0$ or
\item[(ii)]  $V_1\cdot \mathscr{W}_2>0$. 
\end{itemize}
We can apply Lemma \ref{lem1}(2) for $(\mathscr{H}_1, \mathscr{H}_2, \mathscr{H}_3)=(\mathscr{V}_1, \mathscr{V}_2, \mathscr{W}_2)$ in case (i), and Lemma \ref{lem1}(1) for $(\mathscr{H}_1, \mathscr{H}_2, \mathscr{H}_3)=(\mathscr{V}_2, \mathscr{W}_2, \mathscr{V}_1)$ in case (ii). 
Thus both cases gives $\rho_X=3$. 

\begin{case12}
${\rm dim}\,V_2=5$. 
\end{case12}

We know that $x$ and $V_2$ can be connected by the $m_2$-tuple $(\overline{\mathscr{F}},\ldots , \overline{\mathscr{F}})$. 
Since we may assume $x \not\in V_2$, we get a rational curve $C \subset {\rm Locus}_{m_2}(\overline{\mathscr{F}};x)$ such that $V_2\cdot C >0$. 
This implies: 
$$V_2\cdot (a \mathscr{F} + b \mathscr{V}_1) >0$$
for some numbers $a$ and $b$. 
Since 
$$V_2\cdot \mathscr{V}_1+V_2\cdot \mathscr{W}_1 = V_2\cdot \mathscr{F} \ge 0,$$
we obtain either $V_2\cdot \mathscr{V}_1>0$ or $V_2\cdot \mathscr{W}_1>0$. 

\begin{case13}\label{case1}
$V_2\cdot \mathscr{V}_1>0$.
\end{case13}

Note that $\mathscr{F}$ is unsplit at $x$ because $\mathscr{F}$ is locally unsplit. 
Since $V_1\cdot \mathscr{F}>0$ and $V_2\cdot \mathscr{V}_1>0$, we have that ${\rm Locus}(\mathscr{V}_2, \mathscr{V}_1, \mathscr{F};x)$ is non-empty, and Lemma \ref{dim} yields: 
$${\rm dim}\,{\rm Locus}(\mathscr{V}_2, \mathscr{V}_1, \mathscr{F};x) \ge 4+2+2-3=5.$$
It follows from Lemmas \ref{Ca2} and \ref{num} that $N_1(X)$ is generated by $[\mathscr{F}]$, $[\mathscr{V}_1]$, $[\mathscr{V}_2]$. 

\begin{case13}
$V_2\cdot \mathscr{W}_1>0$ and ${\rm dim}\,W_1=4$. 
\end{case13}

We obtain $\rho_X=3$ by applying Lemma \ref{lem1}(2) for $(\mathscr{H}_1, \mathscr{H}_2, \mathscr{H}_3)=(\mathscr{V}_2, \mathscr{W}_1, \mathscr{V}_1)$. 

\begin{case13}
$V_2\cdot \mathscr{W}_1>0$ and ${\rm dim}\,W_1=5$. 
\end{case13}

By construction, $x$ and $W_1$ can be connected by the $(m_1+1)$-tuple $(\overline{\mathscr{F}},\ldots , \overline{\mathscr{F}},\mathscr{V}_1)$. 
Since we may assume $x \not\in W_1$, we get a rational curve $C$ such that $W_1\cdot C >0$ and either $[C] \in \mathbb{R}[\mathscr{F}]$ or $C \in \mathscr{V}_1$. 
This yields either 
\begin{itemize}
\item[(i)] $W_1 \cdot \mathscr{F}>0$ or
\item[(ii)] $W_1 \cdot \mathscr{V}_1>0$. 
\end{itemize}
If (i) holds, then we can replace $\mathscr{V}_1$ with $\mathscr{W}_1$ in the proof of Case \ref{case1}, so we obtain that $N_1(X)$ is generated by $[\mathscr{F}]$, $[\mathscr{W}_1]$, $[\mathscr{V}_2]$. 
If (ii) holds, then we have $\rho_X \le 4$ by applying Lemma \ref{lem1}(3) for $(\mathscr{H}_1, \mathscr{H}_2, \mathscr{H}_3)=(\mathscr{V}_2, \mathscr{W}_1, \mathscr{V}_1)$. 

\begin{case1}
${\rm dim}\,V_i=5$ for some $i \ge 2$. 
\end{case1}

We may assume ${\rm dim}\,V_1=\cdots ={\rm dim}\,V_{i-1}=4$. 
By construction, $x$ and $V_i$ can be connected by the $m_i$-tuple $(\overline{\mathscr{F}},\ldots , \overline{\mathscr{F}})$. 
Since we may assume $x \not\in V_i$, we get a rational curve $C \subset {\rm Locus}_{m_i}(\overline{\mathscr{F}};x)$ such that $V_i\cdot C >0$. 
This implies: 
$$V_i\cdot (a \mathscr{F} + \sum_{j=1}^{i-1} b_j\mathscr{V}_j) >0$$
for some numbers $a$ and $b_j$. 
Since 
$$V_i\cdot \mathscr{V}_j+V_i\cdot \mathscr{W}_j = V_i\cdot \mathscr{F} \ge 0,$$
we obtain either 
\begin{itemize}
\item[(i)] $V_i\cdot \mathscr{V}_j>0$ or
\item[(ii)] $V_i\cdot \mathscr{W}_j>0$
\end{itemize}
for some $j \le i-1$. 
Notice that ${\rm dim}\,V_j=4$. 
We can use Lemma \ref{lem1}(2) for $(\mathscr{H}_1, \mathscr{H}_2, \mathscr{H}_3)=(\mathscr{V}_i, \mathscr{V}_j, \mathscr{W}_j)$ in case (i), and Lemma \ref{lem1}(1) for $(\mathscr{H}_1, \mathscr{H}_2, \mathscr{H}_3)=(\mathscr{V}_j, \mathscr{W}_j, \mathscr{V}_i)$ in case (ii) respectively. 
We thus obtain $\rho_X=3$. 
\end{proof}

\begin{prop}\label{prop5}
Let $X$ be a Fano $6$-fold and $i_X \ge 2$ which admits no unsplit dominating families of rational curves. 
Let $\mathscr{F}$ be a locally unsplit dominating family of rational curves, $\mathscr{G}$ an unsplit family of rational curves. 
Assume that
\begin{itemize}
\item $X$ is rc$(\mathscr{F},\mathscr{G})$-connected, 
\item ${\rm dim}\,{\rm Locus}(\mathscr{G})=5$, 
\item for a general fiber $F$ of the rc$(\mathscr{F})$-fibration, $[\mathscr{G}] \not\in N_1(F,X)$ and $N_1(X)=N_1(F,X)+\mathbb{R}[\mathscr{G}]$
\end{itemize}
Then $\rho_X \le 4$ holds. 
\end{prop}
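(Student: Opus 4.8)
The plan is to prove $\dim N_1(F,X)\le 3$; since $N_1(X)=N_1(F,X)+\mathbb{R}[\mathscr{G}]$ with $[\mathscr{G}]\notin N_1(F,X)$, this is equivalent to the assertion $\rho_X\le 4$. So I would assume for contradiction that $\rho:=\dim N_1(F,X)\ge 4$, whence $\rho_X=\rho+1\ge 5$. First I would check that Construction \ref{const2} applies to $\mathscr{F}$: if $-K_X\cdot\mathscr{F}\ge 6$, then for general $x$ Lemma \ref{dim} gives $\dim{\rm Locus}(\mathscr{F};x)\ge 5$, and since $\mathscr{F}$ is unsplit at $x$ (being locally unsplit), Lemmas \ref{num} and \ref{Ca2} force $\rho_X=1$, a contradiction; hence $-K_X\cdot\mathscr{F}\le 5$. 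Moreover $\mathscr{F}$ is not unsplit (else it would be an unsplit dominating family), so $\overline{\mathscr{F}}$ contains a reducible cycle, and as each of its $\ge 2$ components has anticanonical degree $\ge i_X$ we get $-K_X\cdot\mathscr{F}\ge 2i_X$. Thus $i_X=2$, $4\le -K_X\cdot\mathscr{F}\le 5<6=3i_X$, and reducible cycles in $\overline{\mathscr{F}}$ have exactly two components.

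Next I would apply Construction \ref{const2} to $\mathscr{F}$, obtaining unsplit families $(\mathscr{V}_i,\mathscr{W}_i)$, $i=1,\dots,\rho-1\ge 3$, with $V_i:={\rm Locus}(\mathscr{V}_i)$, $W_i:={\rm Locus}(\mathscr{W}_i)$, satisfying $[\mathscr{V}_i]+[\mathscr{W}_i]=[\mathscr{F}]$, $V_i\cap W_i\ne\emptyset$, and $\mathscr{F},\mathscr{V}_1,\dots,\mathscr{V}_{\rho-1}$ numerically independent generating $N_1(F,X)$. Then $\{[\mathscr{F}],[\mathscr{V}_1],\dots,[\mathscr{V}_{\rho-1}],[\mathscr{G}]\}$ is a basis of $N_1(X)$, and for each $i$ the classes $[\mathscr{G}],[\mathscr{V}_i],[\mathscr{W}_i]$ are numerically independent. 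By Lemma \ref{lem0}, $\dim V_i\in\{4,5\}$; if $\dim V_i=5$ for some $i$ then Proposition \ref{prop2} (applicable as $\rho\ge 3$) yields $\rho_X\le 4$, contradicting $\rho_X\ge 5$. Hence $\dim V_i=4$ for all $i$.

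Now I would bring in the prime divisor $G:={\rm Locus}(\mathscr{G})$ (of dimension $5$), for which $N_1(G,X)=N_1(X)$ by Lemma \ref{Ca2}. If $G\cdot\mathscr{V}_i>0$ for some $i$, then Lemma \ref{lem1}(2) with $(\mathscr{H}_1,\mathscr{H}_2,\mathscr{H}_3)=(\mathscr{G},\mathscr{V}_i,\mathscr{W}_i)$ — using $\dim G=5$, $\dim V_i=4$, $G\cdot\mathscr{V}_i>0$, $V_i\cap W_i\ne\emptyset$ and the independence above — gives $\rho_X=3$, a contradiction; the same works with $(\mathscr{G},\mathscr{W}_i,\mathscr{V}_i)$ whenever $\dim W_i=4$ and $G\cdot\mathscr{W}_i>0$. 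So one may assume $G\cdot\mathscr{V}_i=0$ for all $i$ and $G\cdot\mathscr{W}_i=0$ whenever $\dim W_i=4$. If some $W_i$ has dimension $4$ (Case A), then $G\cdot\mathscr{F}=G\cdot\mathscr{V}_i+G\cdot\mathscr{W}_i=0$; since every $\mathscr{G}$-curve lies in $G$ and the components of an $\overline{\mathscr{F}}$-cycle through a general fibre have classes in $N_1(F,X)$ and hence meet $G$ trivially, one shows ${\rm Locus}(\overline{\mathscr{F}};G)\subseteq G$, so that $G$ is, up to closure, a union of fibres of the rc$(\mathscr{F})$-fibration, hence disjoint from a general fibre $F$ and from all $\mathscr{G}$-curves; then the rc$(\mathscr{F},\mathscr{G})$-class of a general point of $F$ is contained in $F$, contradicting rc$(\mathscr{F},\mathscr{G})$-connectedness as $\dim F<6$. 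If instead $\dim W_i=5$ for all $i$ (Case B), then $W_1,\dots,W_{\rho-1}$ and $G$ are $\rho\ge 4$ prime divisors; arguing as in the final case of the proof of Proposition \ref{prop2} (a general point lies outside each $W_i$ and is joined to it by an $\overline{\mathscr{F}}$-chain with curve classes in $\mathbb{R}[\mathscr{F}]+\mathbb{R}[\mathscr{V}_1]+\dots+\mathbb{R}[\mathscr{V}_{i-1}]$, so $W_i\cdot\mathscr{F}>0$ or $W_i\cdot\mathscr{V}_j>0$ for some $j<i$), and combining with $G\cdot\mathscr{F}=G\cdot\mathscr{W}_i$ and $V_i\cap W_i\ne\emptyset$, one produces a triple satisfying the hypotheses of Lemma \ref{lem1}(3), forcing $\rho_X\le 4$ — a contradiction.

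I expect the main obstacle to be Case~B, where one must run through the subcases determined by which of $W_i\cdot\mathscr{F}$, $W_i\cdot\mathscr{V}_j$, $G\cdot\mathscr{W}_i$ are positive and verify that Lemma \ref{lem1} applies in each; a secondary difficulty is making Case~A fully rigorous, i.e.\ justifying ${\rm Locus}(\overline{\mathscr{F}};G)\subseteq G$: reducible $\overline{\mathscr{F}}$-cycles could a priori protrude from $G$, and ruling this out requires a closer analysis showing that the components of the $\overline{\mathscr{F}}$-cycles meeting $G$ have classes intersecting $G$ in degree $0$.
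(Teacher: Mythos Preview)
Your reduction to the situation where $\rho\ge 4$, $-K_X\cdot\mathscr{F}<3i_X$, and $\dim V_i=4$ for all $i$ is exactly what the paper does. The divergence comes right after, and it is where the real content of the paper's argument lies.

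The paper finishes in two lines. Since $X$ is rc$(\mathscr{F},\mathscr{G})$-connected and $F\ne X$, the general fiber $F={\rm Locus}_m(\overline{\mathscr{F}};x)$ must meet $G$; as $x$ is general one may take $x\notin G$, so there is a curve $C\subset F$ with $G\cdot C>0$. Writing $[C]\in N_1(F,X)=\mathbb{R}[\mathscr{F}]+\sum_j\mathbb{R}[\mathscr{V}_j]$ and using $G\cdot\mathscr{F}\ge 0$ together with $G\cdot\mathscr{V}_j+G\cdot\mathscr{W}_j=G\cdot\mathscr{F}$, one sees that for some $i$ either $G\cdot\mathscr{V}_i>0$ or $G\cdot\mathscr{W}_i>0$. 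In the first case apply Lemma~\ref{lem1}(2) to $(\mathscr{G},\mathscr{V}_i,\mathscr{W}_i)$. In the second case apply Lemma~\ref{lem1}(1) to $(\mathscr{V}_i,\mathscr{W}_i,\mathscr{G})$: the point is that $\dim V_i=4$, and $V_i\cap W_i\ne\emptyset$ together with $G\cdot\mathscr{W}_i>0$ gives a connected chain. Both yield $\rho_X=3$, done.

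The step you are missing is precisely this use of Lemma~\ref{lem1}(1) with $\mathscr{H}_1=\mathscr{V}_i$ when $G\cdot\mathscr{W}_i>0$. You only invoke Lemma~\ref{lem1}(2) with $(\mathscr{G},\mathscr{W}_i,\mathscr{V}_i)$, which needs $\dim W_i=4$; this forces you into your Case~B when $\dim W_i=5$. But Lemma~\ref{lem1}(1) requires only that the \emph{first} family have $4$-dimensional locus, and $\mathscr{V}_i$ always does. With this observation your Cases~A and~B disappear entirely. This matters because, as you yourself note, both cases have genuine gaps: in Case~A the inclusion ${\rm Locus}(\overline{\mathscr{F}};G)\subseteq G$ is not justified (a component of a reducible $\overline{\mathscr{F}}$-cycle can have negative $G$-degree even when $G\cdot\mathscr{F}=0$, so it need not stay in $G$), and your Case~B sketch does not actually produce a triple to which Lemma~\ref{lem1}(3) applies. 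Also, the assertion ``one may assume $G\cdot\mathscr{V}_i=0$'' is too strong; ruling out $G\cdot\mathscr{V}_i>0$ only gives $\le 0$.
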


\begin{proof}
Assume by contradiction that $\rho_X \ge 5$. 
We consider Construction \ref{const2}. 
By assumption, $\rho={\rm dim}\,N_1(F,X)=\rho_X-1 \ge 4$, so Proposition \ref{prop2} yields that ${\rm dim}\,V_i=4$ for every $1 \le i \le \rho-1$. 

Set $G:={\rm Locus}(\mathscr{G})$. 
Since $X$ is rc$(\mathscr{F},\mathscr{G})$-connected, $F={\rm Locus}_m(\overline{\mathscr{F}};x)$ intersects $G$. 
Since $x$ is a general point of $X$, we may assume $x \not\in G$. 
This implies that there exists a rational curve $C \subset F$ such that $G\cdot C>0$. 
Then $[C]$ belongs to $N_1(F,X)=\mathbb{R}[\mathscr{F}]+\mathbb{R}[\mathscr{V}_1]+\cdots +\mathbb{R}[\mathscr{V}_{\rho-1}]$, so we have either 
\begin{itemize}
\item[(i)] $G\cdot \mathscr{V}_i >0$ or
\item[(ii)]  $G\cdot \mathscr{W}_i >0$
\end{itemize}
for some $1 \le i \le \rho-1$. 
However, we can apply Lemma \ref{lem1}(2) for $(\mathscr{H}_1, \mathscr{H}_2, \mathscr{H}_3)=(\mathscr{G}, \mathscr{V}_i, \mathscr{W}_i)$ in case (i), and Lemma \ref{lem1}(1) for $(\mathscr{H}_1, \mathscr{H}_2, \mathscr{H}_3)=(\mathscr{V}_i, \mathscr{W}_i, \mathscr{G})$ in case (ii). 
Thus both cases give $\rho_X=3$, a contradiction. 
\end{proof}

According to Propositions \ref{prop1} and \ref{prop5}, it is sufficient to consider the cases (a) and (b2) in order to prove Theorem \ref{main}(2). 
First, we consider the case (a). 

\begin{prop}\label{prop3}
Let $X$ be a Fano $6$-fold with $\rho_X \ge 5$ and $i_X \ge 2$ which admits no unsplit dominating families of rational curves. 
Let $\mathscr{F}$ be a locally unsplit dominating family of rational curves. 
Let $(\mathscr{V}_1,\mathscr{W}_1),$ $\ldots ,$ $(\mathscr{V}_{\rho-1},\mathscr{W}_{\rho-1})$ be as in Construction \ref{const2}. 
Assume that $X$ is rc$(\mathscr{F})$-connected, and also that $X$ admits a divisorial extremal ray $R$. 
Then $R=\mathbb{R}_{\ge 0}[\mathscr{W}_i]$ for some $1 \le i \le \rho-1$. 
\end{prop}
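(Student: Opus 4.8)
The plan is to argue that a divisorial extremal ray must be generated by the class of one of the families $\mathscr{W}_i$, by excluding all other possibilities one by one. Let $R = \mathbb{R}_{\ge 0}[\Gamma]$ be the divisorial extremal ray, with exceptional divisor $E$, and let $\mathscr{R}$ be an unsplit family of rational curves with $[\mathscr{R}] \in R$ (such a family exists since $R$ is an extremal ray of a Fano manifold, and the minimal family in $R$ is unsplit because $R$ is extremal). Since $X$ is rc$(\mathscr{F})$-connected, Construction \ref{const2} applies with $\rho = \rho_X$, so $N_1(X) = \mathbb{R}[\mathscr{F}] + \mathbb{R}[\mathscr{V}_1] + \cdots + \mathbb{R}[\mathscr{V}_{\rho-1}]$, and by property (3) of that construction we have $[\mathscr{F}] = [\mathscr{V}_i] + [\mathscr{W}_i]$ for every $i$, so the classes $[\mathscr{W}_i]$ also lie in this span. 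The extremal curve class $[\mathscr{R}]$ must be one of the rays spanned by effective combinations that appear in the cone $\overline{NE}(X)$; the families $\mathscr{V}_i$, $\mathscr{W}_i$ and $\mathscr{F}$ give us explicit classes to compare against.

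The key step is to use the locus-dimension estimates of Lemma \ref{dim} together with Lemma \ref{lem1} to derive a contradiction with $\rho_X \ge 5$ unless $R = \mathbb{R}_{\ge 0}[\mathscr{W}_i]$. First I would observe that $E \cdot \mathscr{F} < 0$ is impossible: $\mathscr{F}$ is dominating, so $D \cdot \mathscr{F} \ge 0$ for every effective divisor $D$; hence $E \cdot \mathscr{F} \ge 0$, and since $[\mathscr{F}] = [\mathscr{V}_i] + [\mathscr{W}_i]$, we get $E \cdot \mathscr{V}_i + E \cdot \mathscr{W}_i \ge 0$ for each $i$. Because $R$ is divisorial with negative divisor $E$, any curve $C$ with $[C] \in R$ satisfies $E \cdot C < 0$, so in particular $[\mathscr{R}] \ne [\mathscr{F}]$ and $[\mathscr{R}] \ne [\mathscr{V}_i]$ whenever $E \cdot \mathscr{V}_i \ge 0$. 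I would then split into cases according to the dimension of ${\rm Locus}(\mathscr{V}_i)$ and ${\rm Locus}(\mathscr{W}_i)$, using Lemma \ref{lem0} (every unsplit locus has dimension $4$ or $5$) and Remark \ref{prop2rmk}/Proposition \ref{prop2} (in this setting all the $V_i$ have dimension $4$, since $\rho \ge 5 > 4$ forces it). The heart of the argument is that if $[\mathscr{R}]$ were anything other than some $[\mathscr{W}_i]$, one could produce three numerically independent unsplit families among $\{\mathscr{R}, \mathscr{V}_j, \mathscr{W}_j\}$ meeting the hypotheses of one of the three parts of Lemma \ref{lem1}, forcing $\rho_X \le 4$, contradicting $\rho_X \ge 5$.

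Concretely, I expect the case analysis to run as follows. Since $[\mathscr{R}]$ lies in $N_1(X) = \sum \mathbb{R}[\mathscr{V}_j] + \mathbb{R}[\mathscr{F}]$ and $E \cdot \mathscr{R} < 0$ while $E \cdot \mathscr{F} \ge 0$, there must be some index $j$ with $E \cdot \mathscr{W}_j < 0$ or $E \cdot \mathscr{V}_j < 0$ (expanding $[\mathscr{R}]$ in the basis and using $E \cdot \mathscr{R} < 0$). If $E \cdot \mathscr{V}_j < 0$ for some $j$, then $E \cdot \mathscr{W}_j > 0$ (as their sum equals $E \cdot \mathscr{F} \ge 0$ and the $\mathscr{V}_j$ one is negative), so ${\rm Locus}(\mathscr{W}_j) \cdot \mathscr{V}_j$-type positivity, combined with $V_j \cap W_j \ne \emptyset$ from Construction \ref{const2}(1), lets me feed $(\mathscr{V}_j, \mathscr{W}_j, \text{(another family)})$ into Lemma \ref{lem1} and get $\rho_X \le 4$, a contradiction; hence $E \cdot \mathscr{V}_j \ge 0$ for all $j$, and then $[\mathscr{R}] = [\mathscr{W}_i]$ for some $i$ follows because $\mathscr{W}_i$ is the only generator on which $E$ can be negative, and $[\mathscr{R}]$ lies on an extremal ray so it cannot be a non-trivial positive combination. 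The main obstacle I anticipate is handling the subcase where some ${\rm Locus}(\mathscr{W}_j)$ has dimension $5$: there one cannot directly cite Lemma \ref{lem1}(1) (which wants a $4$-dimensional locus for $\mathscr{H}_1$) and must instead carefully track which of $\mathscr{H}_1 \cdot \mathscr{H}_2 > 0$, $\mathscr{H}_2 \cdot \mathscr{H}_3 > 0$ hold, possibly invoking parts (2) or (3) of Lemma \ref{lem1}, and ruling out $\rho_X = 5$ by checking that even Lemma \ref{lem1}(3)'s bound $\rho_X \le 4$ is already a contradiction — so the bookkeeping of intersection signs among $E$, the $V_j$, $W_j$, and $F$ is where the care is needed, but no genuinely new idea beyond the three cases of Lemma \ref{lem1} should be required.
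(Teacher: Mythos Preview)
Your outline has the right skeleton (Proposition \ref{prop2} forces $\dim V_i=4$; then analyse the signs of $E_R$ on the families and invoke Lemma \ref{lem1}), but there are two genuine gaps where the paper does real work that your sketch does not supply.

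First, you only observe $E_R\cdot\mathscr{F}\ge 0$, whereas the proof needs $E_R\cdot\mathscr{F}=0$. In the paper this is a separate claim with its own case analysis: if $E_R\cdot\mathscr{F}>0$ one picks $i$ with $R\not\subset\mathbb{R}[\mathscr{F}]+\mathbb{R}[\mathscr{V}_i]$, so $R,[\mathscr{V}_i],[\mathscr{W}_i]$ are independent, and then from $E_R\cdot\mathscr{V}_i>0$ or $E_R\cdot\mathscr{W}_i>0$ one builds a \emph{third} unsplit family $\mathscr{H}$ with $[\mathscr{H}]\in R$ meeting the chain $C_i\cup C_i'$, and feeds the triple into Lemma \ref{lem1}. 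Your sentence ``lets me feed $(\mathscr{V}_j,\mathscr{W}_j,\text{(another family)})$ into Lemma \ref{lem1}'' glosses over exactly this: the ``another family'' has to be produced from $R$, shown to be unsplit, and the chain condition of Lemma \ref{lem1}(1) or the positivity condition of Lemma \ref{lem1}(2) has to be verified. Without $E_R\cdot\mathscr{F}=0$, the later sign arguments (your Claim-2 analogue) do not go through, because the paper's contradiction for $E_R\cdot\mathscr{V}_i<0$ uses that $E_R$ vanishes on $[\mathscr{F}],[\mathscr{V}_1],\ldots,[\mathscr{V}_{i-1}]$ together with the chain structure of Construction \ref{const2}.

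Second, and more seriously, your final step is not valid. From ``$E_R\cdot\mathscr{V}_j\ge 0$ for all $j$ and $E_R\cdot\mathscr{F}\ge 0$'' you cannot conclude that $[\mathscr{R}]$ equals some $[\mathscr{W}_i]$: the class $[\mathscr{R}]$ need not be a non-negative combination of the basis $[\mathscr{F}],[\mathscr{V}_1],\ldots,[\mathscr{V}_{\rho-1}]$, so non-negativity of $E_R$ on the basis says nothing about $E_R\cdot\mathscr{R}$. The assertion ``$[\mathscr{R}]$ lies on an extremal ray so it cannot be a non-trivial positive combination'' confuses extremality in $\overline{NE}(X)$ with being a basis vector of $N_1(X)$. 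What actually has to be shown is: once one knows $E_R\cdot\mathscr{W}_i<0$ (so $W_i\subset E_R$), then $[\mathscr{W}_i]\in R$. In the paper this is Claim 3, and it is not a formality: one takes a minimal family $\mathscr{H}$ with $[\mathscr{H}]\in R$ and $W_i\subset{\rm Locus}(\mathscr{H})$, checks $\mathscr{H}$ is unsplit, takes a $4$-dimensional component $Y$ of ${\rm Locus}(\mathscr{W}_i,\mathscr{V}_i;y)$, and uses the sign information $E_R\cdot\mathscr{V}_i>0$, $E_R\cdot\mathscr{W}_i<0$, $E_R\cdot\mathscr{H}<0$ together with extremality of $R$ to verify $NE(Y,X)\cap\mathbb{R}[\mathscr{H}]=\{0\}$, after which Lemma \ref{dim}(2) gives a $5$-dimensional locus and hence $\rho_X\le 3$. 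This argument is the crux of the proposition and is absent from your plan.
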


\begin{proof}
We use the notation of Construction \ref{const2}. 
Notice that $F=X$ and $\rho=\rho_X$. 
By Proposition \ref{prop2}, we have ${\rm dim}\,V_i=4$ for every $1 \le i \le \rho-1$. 
We denote by $E_R$ the exceptional divisor with respect to $R$. 

\begin{claim}\label{claim1}
$E_R\cdot \mathscr{F}=0$. 
\end{claim}

\begin{proof}[Proof of Claim \ref{claim1}]
We assume by contradiction that $E_R\cdot \mathscr{F}>0$. 
Since $R$ is divisorial and $\mathscr{F}$ is dominating, we have: 
$$R \not\subset \mathbb{R}[\mathscr{F}] = (\mathbb{R}[\mathscr{F}] +\mathbb{R}[\mathscr{V}_1]) \cap(\mathbb{R}[\mathscr{F}] +\mathbb{R}[\mathscr{V}_2]),$$
so $R \not\subset (\mathbb{R}[\mathscr{F}] +\mathbb{R}[\mathscr{V}_i])$ for some $i$ ($=1,2$). 
This means that $R$, $[\mathscr{V}_i]$, $[\mathscr{W}_i]$ are independent. 
Since $[\mathscr{F}]=[\mathscr{V}_i]+[\mathscr{W}_i]$, we obtain either $E_R\cdot \mathscr{V}_i>0$ or $E_R\cdot \mathscr{W}_i>0$. 

\begin{case2}\label{case2}
$E_R\cdot \mathscr{W}_i>0$. 
\end{case2}

By Construction \ref{const2}, there exists a connected cycle $C_i+C_i'$ such that $C_i \in \mathscr{V}_i$ and $C_i' \in \mathscr{W}_i$. 
Since $E_R\cdot \mathscr{W}_i>0$, we have $E_R \cap C_i' \ne \emptyset$. 
So, there exists a family $\mathscr{H}$ of rational curves such that $[\mathscr{H}] \in R$ and ${\rm Locus}(\mathscr{H}) \cap C_i' \ne \emptyset$. 
Suppose that $\mathscr{H}$ has minimal anti-canonical degree among such families. 
Let $y$ be a point of ${\rm Locus}(\mathscr{H}) \cap C_i'$, then we see that $\mathscr{H}$ is unsplit at $y$. 

If $\mathscr{H}$ is non-unsplit, then Lemma \ref{dim} yields: 
$${\rm dim}\,{\rm Locus}(\mathscr{W}_i, \mathscr{H}; y) \ge 1+4+2-2=5,$$
so Lemmas \ref{Ca2} and \ref{num} imply that $N_1(X)$ is generated by $[\mathscr{H}]$ and $[\mathscr{W}_i]$, that is a contradiction. 
Thus $\mathscr{H}$ is unsplit. 

However, $\rho_X=3$ follows from Lemma \ref{lem1}(1) for $(\mathscr{H}_1, \mathscr{H}_2, \mathscr{H}_3)=(\mathscr{V}_i, \mathscr{W}_i, \mathscr{H})$, that is also a contradiction. 

\begin{case2}
$E_R\cdot \mathscr{V}_i>0$. 
\end{case2}

By replacing $\mathscr{W}_i$ with $\mathscr{V}_i$ in Case \ref{case2}, we get an unsplit family $\mathscr{H}$ of rational curves such that $[\mathscr{H}] \in R$ and there exists a connected chain parametrized by $(\mathscr{W}_i, \mathscr{V}_i, \mathscr{H})$. 

\begin{case22}
${\rm dim}\,W_i=4$. 
\end{case22}

In this case, $\rho_X$ must be $3$ by Lemma \ref{lem1}(1) for $(\mathscr{H}_1, \mathscr{H}_2, \mathscr{H}_3)=(\mathscr{W}_i, \mathscr{V}_i, \mathscr{H})$, a contradiction. 

\begin{case22}
${\rm dim}\,W_i=5$. 
\end{case22}

By construction, $x$ and $W_i$ can be connected by the $(m_i+1)$-tuple $(\overline{\mathscr{F}},\ldots , \overline{\mathscr{F}},\mathscr{V}_i)$. 
Since we may assume $x \not\in W_i$, we get a rational curve $C$ such that $W_i \cdot C >0$ and either $[C]$ belongs to $\mathbb{R}[\mathscr{F}]+\mathbb{R}[\mathscr{V}_1]+\cdots +\mathbb{R}[\mathscr{V}_{i-1}]$ or $C \in \mathscr{V}_i$. 
Since 
$$W_i\cdot \mathscr{V}_j+W_i\cdot \mathscr{W}_j = W_i\cdot \mathscr{F} \ge 0,$$
we obtain one of 
\begin{itemize}
\item[(i)] $W_i\cdot \mathscr{V}_j>0$ for some $j \ne i$, 
\item[(ii)]  $W_i\cdot \mathscr{W}_j>0$ for some $j \ne i$, 
\item[(iii)] $W_i\cdot \mathscr{V}_i>0$. 
\end{itemize}
However, we can apply Lemma \ref{lem1}(2) for $(\mathscr{H}_1, \mathscr{H}_2, \mathscr{H}_3)=(\mathscr{W}_i, \mathscr{V}_j, \mathscr{W}_j)$ in case (i), Lemma \ref{lem1}(1) for $(\mathscr{H}_1, \mathscr{H}_2, \mathscr{H}_3)=(\mathscr{V}_j, \mathscr{W}_j, \mathscr{W}_i)$ in case (ii), and Lemma \ref{lem1}(2) for $(\mathscr{H}_1, \mathscr{H}_2, \mathscr{H}_3)=(\mathscr{W}_i, \mathscr{V}_i, \mathscr{H})$ in case (iii) respectively. 
Therefore $\rho_X$ must be $3$, a contradiction. 
\end{proof}

Since $E_R$ is an effective divisor, there exists a curve $C$ such that $E_R\cdot C >0$. 
Recall that $N_1(X)$ is generated by $[\mathscr{F}],$ $[\mathscr{V}_1],$ $\ldots ,$ $[\mathscr{V}_{\rho-1}]$ and $E_R\cdot \mathscr{F}=0$, so $E_R\cdot \mathscr{V}_i \ne 0$ for some $1 \le i \le \rho-1$. 
From now on, let $i$ be the smallest index such that $E_R\cdot \mathscr{V}_i \ne 0$. 
Remark that $R$, $[\mathscr{V}_i]$, $[\mathscr{W}_i]$ are possibly not independent. 

\begin{claim}\label{claim2}
$E_R\cdot \mathscr{V}_i > 0$ and $E_R\cdot \mathscr{W}_i < 0$. 
\end{claim}

\begin{proof}[Proof of Claim \ref{claim2}]
We assume by contradiction that $E_R\cdot \mathscr{V}_i < 0$. 
Then we have $V_i \subset E_R$. 
So, $x$ and $E_R$ can be connected by the $m_i$-tuple $(\overline{\mathscr{F}},\ldots , \overline{\mathscr{F}})$. 
Since we may assume $x \not\in E_R$, we get a rational curve $C \subset {\rm Locus}_{m_i}(\overline{\mathscr{F}};x)$ such that $E_R\cdot C >0$. 
This implies: 
$$E_R\cdot (a \mathscr{F} + \sum_{j=1}^{i-1} b_j\mathscr{V}_j) >0$$
for some numbers $a$ and $b_j$. 
This gives a contradiction because $E_R\cdot \mathscr{F}=E_R\cdot \mathscr{V}_1 = \dots = E_R\cdot \mathscr{V}_{i-1}=0$. 
Consequently, we obtain $E_R\cdot \mathscr{V}_i \ (=-E_R\cdot \mathscr{W}_i) > 0$. 
\end{proof}

\begin{claim}\label{claim3}
$[\mathscr{W}_i] \in R$. 
\end{claim}

\begin{proof}[Proof of Claim \ref{claim3}]
We assume by contradiction that $[\mathscr{W}_i] \not\in R$. 
Note that $W_i \subset E_R$ by Claim \ref{claim2}. 
So, there exists a family $\mathscr{H}$ of rational curves such that $[\mathscr{H}] \in R$ and $W_i \subset {\rm Locus}(\mathscr{H})$. 
Suppose that $\mathscr{H}$ has minimal anti-canonical degree among such families. 
This implies that $\mathscr{H}$ is unsplit at some point of $W_i$. 
Since $\mathscr{H}$ and $\mathscr{W}_i$ are numerically independent, we see that $\mathscr{H}$ is unsplit as in Case \ref{case2} of Claim \ref{claim1}. 

By Construction \ref{const2}, there exists a point $y$ such that ${\rm Locus}(\mathscr{W}_i, \mathscr{V}_i ;y)$ is non-empty. 
Lemma \ref{dim} gives an irreducible component $Y$ of ${\rm Locus}(\mathscr{W}_i, \mathscr{V}_i ;y)$ with
$${\rm dim}\,Y \ge 2+2+2-2=4.$$
Then ${\rm Locus}(\mathscr{H};Y)$ is non-empty because $W_i \subset {\rm Locus}(\mathscr{H})$. 
According to Lemma \ref{num}, if there exists a curve $C \subset Y$ such that $C \in \mathbb{R}[\mathscr{H}]$, then we can write: 
$$[C]=a[\mathscr{V}_i]+b[\mathscr{W}_i]=c[\mathscr{H}]\ (\in R)$$
for some $a \in \mathbb{Q}_{\ge 0}$, $b \in \mathbb{Q}$, and $c \in \mathbb{Q}_{> 0}$. 
This implies $b>0$ because  $E_R\cdot \mathscr{V}_i > 0$, $E_R\cdot \mathscr{W}_i < 0$, and $E_R\cdot \mathscr{H} < 0$, so $[\mathscr{W}_i]$ must belong to the extremal ray $R$. 
Thus we have $NE(Y,X) \cap \mathbb{R}[\mathscr{H}] = \{0\}$. 
Hence, Lemma \ref{dim} yields: 
$${\rm dim}\,{\rm Locus}(\mathscr{H};Y) \ge 4+2-1=5,$$
so $N_1(X)$ must be generated by $[\mathscr{V}_i ], [\mathscr{W}_i ], [\mathscr{H}]$ according to Lemmas \ref{Ca2} and \ref{num}, a contradiction. 
\end{proof}

Consequently, we obtain the conclusion of Proposition \ref{prop3}. 
\end{proof}

\begin{cor}[{Theorem \ref{main}(2) for the case (a)}]
Let $X$ be a Fano $6$-fold with $i_X \ge 2$ which admits no unsplit dominating families of rational curves. 
Assume that $X$ is rc$(\mathscr{F})$-connected for some locally unsplit dominating family $\mathscr{F}$, and also that every extremal ray of $X$ is divisorial. 
Then $\rho_X \le 4$ holds. 
\end{cor}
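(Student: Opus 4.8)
The plan is to argue by contradiction: assume $\rho_X \ge 5$ and derive an absurdity, so that $\rho_X \le 4$ must hold. First I would record that $-K_X \cdot \mathscr{F} \le 5$, by the argument already used in Remark \ref{prop2rmk}: if $-K_X\cdot\mathscr{F} \ge 6$, then since $\mathscr{F}$ is unsplit at a general point $x$, Lemma \ref{dim} gives ${\rm dim}\,{\rm Locus}(\mathscr{F};x) \ge 5$, and Lemmas \ref{Ca2} and \ref{num} then force $N_1(X) = \mathbb{R}[\mathscr{F}]$, contradicting $\rho_X \ge 5$. In particular $-K_X\cdot\mathscr{F} < 3i_X$ (recall $i_X\ge 2$), so Construction \ref{const2} is legitimately available for $\mathscr{F}$. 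Since $X$ is rc$(\mathscr{F})$-connected, the general fibre of the rc$(\mathscr{F})$-fibration is all of $X$ (as noted in the proof of Proposition \ref{prop3}), so in that construction $\rho = \rho_X \ge 5$, and we obtain pairs of unsplit families $(\mathscr{V}_1,\mathscr{W}_1), \ldots, (\mathscr{V}_{\rho_X-1},\mathscr{W}_{\rho_X-1})$ with $[\mathscr{V}_i]+[\mathscr{W}_i] = [\mathscr{F}]$ and with $[\mathscr{F}], [\mathscr{V}_1], \ldots, [\mathscr{V}_{\rho_X-1}]$ a basis of $N_1(X)$.

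Keeping these families fixed, the key step is to apply Proposition \ref{prop3} simultaneously to every extremal ray of $X$. Let $R$ be an arbitrary extremal ray; by hypothesis $R$ is divisorial, and the remaining hypotheses of Proposition \ref{prop3} are in force (Fano $6$-fold, $\rho_X \ge 5$, $i_X \ge 2$, no unsplit dominating family, $X$ rc$(\mathscr{F})$-connected), so the proposition yields $R = \mathbb{R}_{\ge 0}[\mathscr{W}_i]$ for some $1 \le i \le \rho_X-1$. Consequently every extremal ray of $\overline{NE}(X)$ lies in the finite set $\{\mathbb{R}_{\ge 0}[\mathscr{W}_1], \ldots, \mathbb{R}_{\ge 0}[\mathscr{W}_{\rho_X-1}]\}$, which has at most $\rho_X - 1$ members.

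To conclude I would invoke the Cone Theorem: since $X$ is Fano, $\overline{NE}(X)$ is a pointed rational polyhedral cone, hence equals the convex cone on its finitely many extremal rays; by the previous step those rays lie in a linear subspace of $N_1(X)$ of dimension at most $\rho_X-1$, while $\overline{NE}(X)$ spans all of $N_1(X)$, of dimension $\rho_X$ — a contradiction. Hence $\rho_X \le 4$, which is exactly case (a) of Theorem \ref{main}(2). All the substantive difficulty is packaged into Proposition \ref{prop3} (and, through it, Proposition \ref{prop2} and Lemma \ref{lem1}); the only genuinely new ingredient here is the closing polyhedral-cone count, and the points one must not gloss over are that Construction \ref{const2} is available (handled by the bound $-K_X\cdot\mathscr{F}\le 5$), that one and the same tuple $(\mathscr{V}_i,\mathscr{W}_i)$ serves for every extremal ray, and that a full-dimensional polyhedral cone in $N_1(X)$ cannot be generated by fewer than $\rho_X$ rays.
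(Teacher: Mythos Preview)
Your argument is correct and matches the paper's own proof: assume $\rho_X\ge 5$, apply Proposition~\ref{prop3} to every (divisorial) extremal ray to force each one into $\{\mathbb{R}_{\ge 0}[\mathscr{W}_1],\ldots,\mathbb{R}_{\ge 0}[\mathscr{W}_{\rho_X-1}]\}$, and then use the Cone Theorem to contradict $\dim N_1(X)=\rho_X$. The paper states this in one line (``$N_1(X)$ is generated by $[\mathscr{W}_1],\ldots,[\mathscr{W}_{\rho-1}]$, a contradiction''); you have simply unpacked the implicit steps.
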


\begin{proof}
If $\rho_X \ge 5$, then Proposition \ref{prop3} yields that $N_1(X)$ is generated by $[\mathscr{W}_1],$ $\ldots ,$ $[\mathscr{W}_{\rho-1}]$, that is a contradiction. 
\end{proof}

Finally, we consider the case (b2). 

\begin{ass}\label{Ass}
$X$ is a Fano $6$-fold with $i_X \ge 2$ which admits no unsplit dominating families of rational curves. 
$\mathscr{F}$ is a locally unsplit dominating family of rational curves, and $\mathscr{G}$ is an unsplit family of rational curves such that
\begin{itemize}
\item $\mathscr{F}$ and $\mathscr{G}$ are numerically independent, 
\item $X$ is rc$(\mathscr{F},\mathscr{G})$-connected, 
\item ${\rm Locus}(\mathscr{G},\mathscr{F};x) \ne \emptyset$ for a general point $x \in X$. 
\end{itemize}
Set $G:={\rm Locus}(\mathscr{G})$. 
Remark that the last condition means ${\rm Locus}(\overline{\mathscr{F}};G)=X$. 
\end{ass}

We consider the following construction, which is an analogue of Construction \ref{const2}. 

\begin{const}\label{const3}
Let $X$, $\mathscr{F}$, and $\mathscr{G}$ be as in Assumption \ref{Ass}. 
Note that $-K_X\cdot \mathscr{F} \le 5 <3i_X$ as in Remark \ref{prop2rmk}. 
Let $x$ a general point of $X$. 
Since $X$ is rc$(\mathscr{F},\mathscr{G})$-connected, we can write
$$X={\rm Locus}(\mathscr{H}_m, \ldots , \mathscr{H}_1;x),$$
where each $\mathscr{H}_i$ is either $\overline{\mathscr{F}}$ or $\mathscr{G}$. 
Set
$$U_i:=\bigcup _{j=1}^i{\rm Locus}(\mathscr{H}_j, \ldots , \mathscr{H}_1;x)$$
for $1 \le i \le m$, and $\rho :=\rho_X$. 
We take integers $1 \le m_1 \le \cdots \le m_{\rho-2} < m$ and pairs of families $(\mathscr{V}_1,\mathscr{W}_1),$ $\ldots ,$ $(\mathscr{V}_{\rho-2},\mathscr{W}_{\rho-2})$ as follows: 

First, if $\rho \ge 3$, then let $m_1$ be the greatest integer such that
$$N_1(U_{m_1},X) \subset \mathbb{R}[\mathscr{F}]+\mathbb{R}[\mathscr{G}].$$
Since
$$N_1(U_{m_1+1},X) \not\subset \mathbb{R}[\mathscr{F}]+\mathbb{R}[\mathscr{G}],$$
Lemma \ref{num} gives a reducible cycle $C_1+C_1'$ parametrized by $\overline{\mathscr{F}}$ ($=\mathscr{H}_{m_1+1}$) such that
\begin{itemize}
\item $x$ and $C_1$ can be connected by $(\mathscr{H}_1, \ldots , \mathscr{H}_{m_1})$, 
\item neither $[C_1]$ nor $[C_1']$ belongs to $\mathbb{R}[\mathscr{F}]+\mathbb{R}[\mathscr{G}]$. 
\end{itemize}
Let $\mathscr{V}_1$ (resp.\ $\mathscr{W}_1$) be a family of deformations of $C_1$ (resp.\ $C_1'$). 

Next, if $\rho \ge 4$, then let $m_2$ be the greatest integer such that
$$N_1(U_{m_2},X) \subset \mathbb{R}[\mathscr{F}]+\mathbb{R}[\mathscr{G}]+\mathbb{R}[\mathscr{V}_1].$$
Since
$$N_1(U_{m_2+1},X) \not\subset \mathbb{R}[\mathscr{F}]+\mathbb{R}[\mathscr{G}]+\mathbb{R}[\mathscr{V}_1],$$
Lemma \ref{num} gives a reducible cycle $C_2+C_2'$ parametrized by $\overline{\mathscr{F}}$ ($=\mathscr{H}_{m_2+1}$) such that
\begin{itemize}
\item $x$ and $C_2$ can be connected by $(\mathscr{H}_1, \ldots , \mathscr{H}_{m_2})$, 
\item neither $[C_2]$ nor $[C_2']$ belongs to $\mathbb{R}[\mathscr{F}]+\mathbb{R}[\mathscr{G}]+\mathbb{R}[\mathscr{V}_1]$. 
\end{itemize}
Let $\mathscr{V}_2$ (resp.\ $\mathscr{W}_2$) be a family of deformations of $C_2$ (resp.\ $C_2'$), and so on. 

Set $V_i:={\rm Locus}(\mathscr{V}_i)$ and $W_i:={\rm Locus}(\mathscr{W}_i)$. 
By construction, we know: 
\begin{enumerate}
\item $V_i \cap W_i \ne \emptyset$, 
\item $\mathscr{V}_i$ and $\mathscr{W}_i$ are unsplit, 
\item $[\mathscr{V}_i]+[\mathscr{W}_i]=[\mathscr{F]}$, 
\item $\mathscr{F},$ $\mathscr{G},$ $\mathscr{V}_1,$ $\ldots ,$ $\mathscr{V}_{\rho-2}$ are numerically independent, 
\item $N_1(X)$ is generated by $[\mathscr{F}], [\mathscr{G}], [\mathscr{V}_1], \ldots , [\mathscr{V}_{\rho-2}]$. 
\end{enumerate}
\end{const}

\begin{prop}\label{prop4}
Let $X$, $\mathscr{F}$, and $\mathscr{G}$ be as in Assumption \ref{Ass}, and let $(\mathscr{V}_1,\mathscr{W}_1),$ $\ldots,$ $(\mathscr{V}_{\rho-2},\mathscr{W}_{\rho-2})$ be as in Construction \ref{const3}. 
Assume that $\rho_X \ge 5$ and $X$ admits a divisorial extremal ray $R$. 
Then $R=\mathbb{R}_{\ge 0}[\mathscr{W}_i]$ for some $1 \le i \le \rho-2$. 
\end{prop}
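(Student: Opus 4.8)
The plan is to mirror the structure of the proof of Proposition \ref{prop3}, replacing Construction \ref{const2} by Construction \ref{const3} and carrying the extra family $\mathscr{G}$ along. By Proposition \ref{prop5} (applied under Assumption \ref{Ass}, which is exactly the setting of case (b2) of Proposition \ref{prop1}), we already know $\rho_X \le 4$ is the expected bound, so assuming $\rho_X \ge 5$ we are really arguing toward showing that the only divisorial rays available are the $\mathbb{R}_{\ge 0}[\mathscr{W}_i]$. Throughout one keeps in mind that $-K_X\cdot \mathscr{F}\le 5<3i_X$ (Remark \ref{prop2rmk}), that $D\cdot\mathscr{F}\ge 0$ and $D\cdot\mathscr{G}\ge 0$ for every effective divisor $D$ (since $\mathscr{F}$ is dominating and, by Assumption \ref{Ass}, $\mathrm{Locus}(\overline{\mathscr{F}};G)=X$ forces $\mathscr{G}$-curves to move enough), and that by Proposition \ref{prop2} one has $\mathrm{dim}\,V_i=4$ for every $i$ — here I would first check that the hypotheses of Proposition \ref{prop2} are met, i.e.\ that $\rho$ in Construction \ref{const3} is $\ge 3$, which holds since $\rho_X\ge 5$ and there are $\rho_X-2$ families $\mathscr{V}_i$; strictly one must adapt Proposition \ref{prop2}'s conclusion to the $\mathscr{G}$-enriched construction, but the same Lemma \ref{lem1} inputs apply verbatim.

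The first main step is the analogue of Claim \ref{claim1}: show $E_R\cdot\mathscr{F}=0$ and $E_R\cdot\mathscr{G}=0$, where $E_R$ is the exceptional divisor of $R$. For $E_R\cdot\mathscr{F}$: if it were positive, then since $R$ is divisorial and $\mathscr{F},\mathscr{G}$ are both non-negative on $E_R$, $R$ cannot lie in $\mathbb{R}[\mathscr{F}]+\mathbb{R}[\mathscr{G}]$, hence $R\not\subset(\mathbb{R}[\mathscr{F}]+\mathbb{R}[\mathscr{G}]+\mathbb{R}[\mathscr{V}_i])$ for some $i\in\{1,2\}$ (using $\rho_X\ge5$ so $\mathscr{V}_1,\mathscr{V}_2$ exist), so $R,[\mathscr{V}_i],[\mathscr{W}_i]$ are independent and $E_R$ is positive on $\mathscr{V}_i$ or on $\mathscr{W}_i$; one then reruns the Case \ref{case2}/Case \ref{case12} dichotomy of Claim \ref{claim1} — producing an unsplit family $\mathscr{H}$ with $[\mathscr{H}]\in R$ meeting a $\mathscr{W}_i$- or $\mathscr{V}_i$-curve, invoking Lemma \ref{dim} to rule out $\mathscr{H}$ non-unsplit, then Lemma \ref{lem1}(1) to force $\rho_X=3$, a contradiction — with no change except keeping $\mathscr{G}$ as a spectator. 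For $E_R\cdot\mathscr{G}=0$: if $E_R\cdot\mathscr{G}>0$ then $G\not\subset E_R$, so a $\mathscr{G}$-curve $C$ with $E_R\cdot C>0$ exists, giving an unsplit family in $R$ meeting $G=\mathrm{Locus}(\mathscr{G})$, and since $\mathrm{dim}\,G=5$ (this is part of case (b2)? — if not one uses $\mathrm{dim}\,G\ge 4$ from Lemma \ref{lem0}) one applies Lemma \ref{dim} or Lemma \ref{lem1}(2)/(3) to the triple built from $\mathscr{G}$, $\mathscr{H}$, and one of the $\mathscr{V}_i$ reachable from $G$ inside $N_1(X)$, again collapsing $\rho_X$ to $\le 4$; care is needed here since $\mathscr{G}$ being unsplit but not dominating means one must chain through $\mathscr{F}$ as in Proposition \ref{prop5} to land a curve on $G$ — this is the step I expect to be most delicate.

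The second main step is the analogue of Claim \ref{claim2}: since $E_R$ is effective there is a curve $C$ with $E_R\cdot C>0$, and because $N_1(X)=\mathbb{R}[\mathscr{F}]+\mathbb{R}[\mathscr{G}]+\mathbb{R}[\mathscr{V}_1]+\cdots$ with $E_R\cdot\mathscr{F}=E_R\cdot\mathscr{G}=0$, some $E_R\cdot\mathscr{V}_i\ne0$; taking $i$ minimal, one shows $E_R\cdot\mathscr{V}_i>0$ and $E_R\cdot\mathscr{W}_i<0$ exactly as in Claim \ref{claim2}: if $E_R\cdot\mathscr{V}_i<0$ then $V_i\subset E_R$, so $x$ connects to $E_R$ via the $m_i$-tuple $(\mathscr{H}_1,\ldots,\mathscr{H}_{m_i})$ (now a mix of $\overline{\mathscr{F}}$ and $\mathscr{G}$, whence the intermediate curve lies in $\mathbb{R}[\mathscr{F}]+\mathbb{R}[\mathscr{G}]+\mathbb{R}[\mathscr{V}_1]+\cdots+\mathbb{R}[\mathscr{V}_{i-1}]$), and $E_R$ being positive on that curve contradicts $E_R\cdot\mathscr{F}=E_R\cdot\mathscr{G}=E_R\cdot\mathscr{V}_1=\cdots=E_R\cdot\mathscr{V}_{i-1}=0$. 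The final step is the analogue of Claim \ref{claim3}: assuming $[\mathscr{W}_i]\not\in R$, then $W_i\subset E_R$ gives an unsplit family $\mathscr{H}$ with $[\mathscr{H}]\in R$ and $W_i\subset\mathrm{Locus}(\mathscr{H})$, one builds via Lemma \ref{dim} a $4$-dimensional $Y\subset\mathrm{Locus}(\mathscr{W}_i,\mathscr{V}_i;y)$, checks $NE(Y,X)\cap\mathbb{R}[\mathscr{H}]=\{0\}$ using the sign computation $E_R\cdot\mathscr{V}_i>0>E_R\cdot\mathscr{W}_i$, $E_R\cdot\mathscr{H}<0$ (exactly the $b>0$ argument of Claim \ref{claim3}, which also shows $[\mathscr{W}_i]$ would then lie in $R$, a contradiction), and concludes $\mathrm{dim}\,\mathrm{Locus}(\mathscr{H};Y)\ge5$, whence $N_1(X)$ is spanned by $[\mathscr{V}_i],[\mathscr{W}_i],[\mathscr{H}]$ — impossible for $\rho_X\ge5$. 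Hence $[\mathscr{W}_i]\in R$, and since $R$ is extremal $R=\mathbb{R}_{\ge0}[\mathscr{W}_i]$, completing the proof. The hard part will be the $E_R\cdot\mathscr{G}=0$ sub-case: unlike $\mathscr{F}$, the family $\mathscr{G}$ is not dominating, so translating "$E_R$ positive on $\mathscr{G}$" into a usable chain that reaches a curve on which Lemma \ref{lem1} bites requires threading through the rc$(\mathscr{F},\mathscr{G})$-connectedness hypothesis and the $\mathrm{Locus}(\overline{\mathscr{F}};G)=X$ condition of Assumption \ref{Ass}, much as in the proof of Proposition \ref{prop5}.
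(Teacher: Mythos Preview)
Your overall plan---adapt Propositions \ref{prop2} and \ref{prop3} to Construction \ref{const3} and then pin down $R$ as some $\mathbb{R}_{\ge 0}[\mathscr{W}_i]$---matches the paper's, and the analogues of Claims \ref{claim2} and \ref{claim3} go through essentially as you describe. But two of your inputs are wrong, and the paper's proof pivots precisely on correcting them.

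First, $\mathscr{G}$ is not dominating, so your blanket assertion $D\cdot\mathscr{G}\ge 0$ is false; the paper must separately treat $E_R\cdot\mathscr{G}<0$ (then $G\subset E_R$, and the $\mathscr{F}$-curve connecting a general $x$ to $G$ forces $E_R\cdot\mathscr{F}>0$, contradicting the already-established $E_R\cdot\mathscr{F}=0$). Second, and more seriously, your guess ${\rm dim}\,G=5$ is backwards: case (b1), not (b2), has the $5$-dimensional locus, and in fact the paper \emph{proves} ${\rm dim}\,G=4$ here (from $G\cdot\mathscr{F}>0$ one gets $G\cdot\mathscr{V}_1>0$ or $G\cdot\mathscr{W}_1>0$, and Lemma \ref{lem1}(1) or (2) with ${\rm dim}\,G=5$ would collapse $\rho_X$ to $3$). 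Your proposed Lemma \ref{lem1}(2)/(3) attack on $E_R\cdot\mathscr{G}>0$ therefore does not fire. The paper instead shows $N_1(G,X)\subset\mathbb{R}[\mathscr{F}]+\mathbb{R}[\mathscr{G}]$ (via ${\rm dim}\,{\rm Locus}(\mathscr{G},\mathscr{F};x)\ge 4={\rm dim}\,G$), then uses $X={\rm Locus}(\overline{\mathscr{F}};G)$ to extract a reducible $\overline{\mathscr{F}}$-cycle $C+C'$ with $C$ meeting $G$ and $[C]\notin\mathbb{R}[\mathscr{F}]+\mathbb{R}[\mathscr{G}]+(\pm R)$, and applies Lemma \ref{lem1}(1) or (2) to the triple $(\mathscr{H},\mathscr{G},\mathscr{U})$ where $\mathscr{U}\ni C$ and $[\mathscr{H}]\in R$---the case split being on ${\rm dim}\,{\rm Locus}(\mathscr{H})$, with ${\rm dim}\,G=4$ playing the role of the middle term.

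There is also a structural lemma you are missing for the ``easy'' adaptations. When you write ``keeping $\mathscr{G}$ as a spectator'' and ``the same Lemma \ref{lem1} inputs apply verbatim'', that glosses over a real obstacle: in Construction \ref{const3} the chains $(\mathscr{H}_1,\ldots,\mathscr{H}_{m_i})$ mix $\overline{\mathscr{F}}$ and $\mathscr{G}$, so the argument ``$x\notin V_i$ gives a curve $C$ with $V_i\cdot C>0$, hence $V_i\cdot\mathscr{F}>0$'' from Proposition \ref{prop2} could instead yield $V_i\cdot\mathscr{G}>0$, which is useless for that proof. The paper handles this with a preliminary claim: for any unsplit $\mathscr{H}$ with ${\rm dim}\,{\rm Locus}(\mathscr{H})=5$ and $\mathscr{F},\mathscr{G},\mathscr{H}$ independent, one has ${\rm Locus}(\mathscr{H})\cdot\mathscr{G}=0$ (else ${\rm Locus}(\mathscr{H},\mathscr{G},\mathscr{F};x)$ is nonempty of dimension $\ge 5$, forcing $\rho_X=3$). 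This kills the unwanted $\mathscr{G}$-branch whenever ${\rm dim}\,V_i=5$ or ${\rm dim}\,W_i=5$, and is what actually licenses the wholesale import of the Proposition \ref{prop2}/\ref{prop3} arguments. (Incidentally, Proposition \ref{prop5} handles case (b1), not (b2); it is not available here.)
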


\begin{proof}

\setcounter{claim}{0}

\begin{claim}\label{claim4}
If there exists an unsplit family $\mathscr{H}$ of rational curves such that
\begin{itemize}
\item $H:={\rm Locus}(\mathscr{H})$ has dimension $5$, 
\item $\mathscr{F}$, $\mathscr{G}$, $\mathscr{H}$ are numerically independent. 
\end{itemize}
Then $H \cdot \mathscr{G}=0$ holds. 
\end{claim}

\begin{proof}[Proof of Claim \ref{claim4}]
Let $x$ be a general point of $X$. 
If $H \cdot \mathscr{G} \ne 0$, then ${\rm Locus}(\mathscr{H}, \mathscr{G}, \mathscr{F};x)$ is non-empty, so Lemma \ref{dim} gives: 
$${\rm dim}\,{\rm Locus}(\mathscr{H}, \mathscr{G}, \mathscr{F};x) \ge 4+2+2-3=5.$$
It follows from Lemmas \ref{Ca2} and \ref{num} that $N_1(X)$ is generated by $[\mathscr{F}]$, $[\mathscr{G}]$, $[\mathscr{H}]$, a contradiction. 
\end{proof}

By Claim \ref{claim4}, if $V_j$ (resp. $W_j$) has dimension $5$ for some $j$, then $V_j\cdot \mathscr{G} =0$ (resp.\ $W_j\cdot \mathscr{G} =0$). 
This allows us to prove that 
\begin{itemize}
\item ${\rm dim}V_i =4$ for every $1 \le i \le \rho-2$, 
\item $E_R\cdot \mathscr{F}=0$, 
\item If $E_R\cdot \mathscr{G}=0$, then $R=\mathbb{R}_{\ge 0}[\mathscr{W}_i]$ for some $1 \le i \le \rho-2$, 
\end{itemize}
as in the proof of Propositions \ref{prop2} and \ref{prop3} by replacing $(\overline{\mathscr{F}},\overline{\mathscr{F}},\ldots)$ with $(\mathscr{H}_1, \mathscr{H}_2, \ldots)$. 
So we only need to show that $E_R\cdot \mathscr{G} = 0$. 

\begin{claim}\label{claim5}
${\rm dim}\,G=4$. 
\end{claim}

\begin{proof}[Proof of Claim \ref{claim5}]
We assume by contradiction that ${\rm dim}\,G=5$. 
Let $x$ be a general point of $X$. 
The assumption ${\rm Locus}(\mathscr{G}, \mathscr{F};x) \ne \emptyset$ means that $x$ and $G$ can be connected by a curve parametrized by $\mathscr{F}$. 
Since $x \not\in G$, we have $G\cdot \mathscr{F} >0$. 
This implies either 
\begin{itemize}
\item[(i)] $G\cdot \mathscr{V}_1 >0$ or
\item[(ii)] $G\cdot \mathscr{W}_1 >0$. 
\end{itemize}
We can apply Lemma \ref{lem1}(2) for $(\mathscr{H}_1, \mathscr{H}_2, \mathscr{H}_3)=(\mathscr{G}, \mathscr{V}_1, \mathscr{W}_1)$ in case (i), and Lemma \ref{lem1}(1) for $(\mathscr{H}_1, \mathscr{H}_2, \mathscr{H}_3)=(\mathscr{V}_1, \mathscr{W}_1, \mathscr{G})$ in case (ii) respectively. 
Therefore $\rho_X$ must be $3$, a contradiction. 
\end{proof}

\begin{claim}\label{claim7}
$N_1(G,X)$ is contained in $\mathbb{R}[\mathscr{F}]+ \mathbb{R}[\mathscr{G}]$. 
\end{claim}

\begin{proof}[Proof of Claim \ref{claim7}]
For a general point $x \in X$, Lemma \ref{dim} gives: 
$${\rm dim}\,{\rm Locus}(\mathscr{G}, \mathscr{F};x) \ge 4+2-2=4,$$
so this locus coincides with $G$ by Claim \ref{claim5}. 
Hence, the assertion follows from Lemma \ref{num}. 
\end{proof}

\begin{claim}\label{claim6}
$E_R\cdot \mathscr{G} = 0$. 
\end{claim}

\begin{proof}[Proof of Claim \ref{claim6}]
First, we assume  $E_R\cdot \mathscr{G} < 0$. 
Then $G \subset E_R$. 
Since a general point $x \in X$ and $G$ can be connected by a curve parametrized by $\mathscr{F}$, this implies $E_R\cdot \mathscr{F} >0$, a contradiction. 

Next, we assume $E_R\cdot \mathscr{G} > 0$. 
Since $X={\rm Locus}(\overline{\mathscr{F}};G)$, there exists a reducible cycle $C+C'$ parametrized by $\overline{\mathscr{F}}$ such that 
\begin{itemize}
\item $C$ intersects $G$,
\item neither $[C]$ nor $[C']$ belongs to $\mathbb{R}[\mathscr{F}]+\mathbb{R}[\mathscr{G}]+(\pm R)$, where $\pm R:=R \cup(-R)$. 
\end{itemize}
Indeed, otherwise $N_1(X)$ must be generated by $[\mathscr{F}]$, $[\mathscr{G}]$, $R$ according to Lemma \ref{num} and Claim \ref{claim7}. 
Let $\mathscr{U}$ be a family of deformations of $C$. 
Notice that $\mathscr{U}$ is unsplit. 

Since $E_R\cdot \mathscr{G} > 0$, $E_R$ intersects ${\rm Locus}(\mathscr{G};C)$, so we get a family $\mathscr{H}$ of rational curves such that $[\mathscr{H}] \in R$ and $H:={\rm Locus}(\mathscr{H})$ intersects ${\rm Locus}(\mathscr{G};C)$. 
Suppose that $\mathscr{H}$ has minimal anti-canonical degree among such families. 
This implies that $\mathscr{H}$ is unsplit on ${\rm Locus}(\mathscr{G};C)$. 
Note that $\mathscr{H}$ and $\mathscr{G}$ are numerically independent because $E_R\cdot \mathscr{G} > 0$ and $E_R\cdot \mathscr{H} < 0$. 
Hence, we see that $\mathscr{H}$ is unsplit as in Case \ref{case2} of Proposition \ref{prop3}. 

If ${\rm dim}\,H=4$, then $\rho_X=3$ by Lemma \ref{lem1}(1) for $(\mathscr{H}_1, \mathscr{H}_2, \mathscr{H}_3)=(\mathscr{H}, \mathscr{G}, \mathscr{U})$, a contradiction. 
However, even if ${\rm dim}\,H=5$, then $H=E_R$, so $H\cdot \mathscr{G}>0$. 
This implies $\rho_X=3$ by Lemma \ref{lem1}(2) for $(\mathscr{H}_1, \mathscr{H}_2, \mathscr{H}_3)=(\mathscr{H}, \mathscr{G}, \mathscr{U})$, also a contradiction. 
\end{proof}
Thus we obtain the conclusion of Proposition \ref{prop4}. 
\end{proof}

\begin{cor}[{Theorem \ref{main}(2) for the case (b2)}]
Let $X$, $\mathscr{F}$, and $\mathscr{G}$ be as in Assumption \ref{Ass}. 
Assume that every extremal ray of $X$ is divisorial. 
Then $\rho_X \le 4$ holds. 
\end{cor}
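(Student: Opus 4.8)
The plan is to deduce the corollary from Proposition \ref{prop4} by a short dimension count, in exactly the same way that Theorem \ref{main}(2) was obtained for case (a). So I would argue by contradiction: suppose $\rho_X \ge 5$, and aim for an absurdity.

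First I would invoke the geometry of the Mori cone. Since $X$ is a Fano manifold, $\overline{NE}(X)$ is a rational polyhedral cone by the Cone Theorem, so $N_1(X)$ is spanned by the classes of the (finitely many) extremal rays of $X$. By hypothesis every extremal ray of $X$ is divisorial, and since $\rho_X \ge 5$ there is at least one such ray; hence for \emph{each} extremal ray $R$ the hypotheses of Proposition \ref{prop4} are satisfied. Applying it, we get $R = \mathbb{R}_{\ge 0}[\mathscr{W}_i]$ for some $1 \le i \le \rho - 2$, where $\rho = \rho_X$ and $\mathscr{W}_1, \ldots, \mathscr{W}_{\rho-2}$ are the families produced by Construction \ref{const3} from the data $\mathscr{F}, \mathscr{G}$ of Assumption \ref{Ass}.

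Next I would run the count. By the previous step, $N_1(X)$ is generated by the $\rho - 2$ classes $[\mathscr{W}_1], \ldots, [\mathscr{W}_{\rho-2}]$, so $\rho_X = \dim N_1(X) \le \rho - 2 = \rho_X - 2$, which is absurd. Therefore $\rho_X \le 4$, as claimed.

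I do not expect any genuine obstacle here: all the work has already been done in Proposition \ref{prop4}. The only points needing a line of care are (i) that the hypotheses of Assumption \ref{Ass} are precisely what allows Construction \ref{const3} to be carried out, so that the families $\mathscr{W}_i$ exist, and (ii) that Construction \ref{const3} produces exactly $\rho_X - 2$ of them once $\rho_X \ge 5$, so that the final inequality reads $\rho_X \le \rho_X - 2$ rather than something weaker; both are immediate from the definitions. If there is a subtlety at all, it is merely in recording that every extremal ray—not just one—is divisorial, which is exactly the standing assumption.
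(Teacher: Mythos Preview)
Your proposal is correct and follows essentially the same argument as the paper's own proof, which also assumes $\rho_X \ge 5$ and derives the contradiction that $N_1(X)$ is generated by $[\mathscr{W}_1], \ldots, [\mathscr{W}_{\rho-2}]$. You have merely made explicit the use of the Cone Theorem to pass from ``every extremal ray is some $\mathbb{R}_{\ge 0}[\mathscr{W}_i]$'' to ``the $[\mathscr{W}_i]$ span $N_1(X)$,'' which the paper leaves implicit.
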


\begin{proof}
If $\rho_X \ge 5$, then Proposition \ref{prop4} yields that $N_1(X)$ is generated by $[\mathscr{W}_1],$ $\ldots ,$ $[\mathscr{W}_{\rho-2}]$, that is a contradiction. 
\end{proof}


\begin{thebibliography}{1}
\bibitem{ACO}
M.\ Andreatta, E.\ Chierici, and G.\ Occhetta, {Generalized Mukai conjecture for special Fano varieties}, \textit{Cent.\ Eur.\ J.\ Math.} \textbf{2}(2) (2004), 272-293. 

\bibitem{BCDD}
L.\ Bonavero, C.\ Casagrande, O.\ Debarre, and S.\ Druel, {Sur une conjecture de Mukai}, \textit{Comment.\ Math.\ Helv.}, \textbf{78}(3) (2003), 601-626. 

\bibitem{Ca1}
C.\ Casagrande, {The number of vertices of a Fano polytope}, \textit{Ann.\ Inst.\ Fourier (Grenoble)}, \textbf{54}(3) (2006), 363-403. 

\bibitem{Ca2}
C.\ Casagrande, {On the Picard number of divisors in Fano manifolds}, \textit{The Annales Scientifiques de lfEcole Normale Superieure}, \textbf{45}(3) (2012), 363-403. 

\bibitem{CMSB}
K.\ Cho, Y.\ Miyaoka, and N.\ I.\  Shepherd-Barron, {Characterizations of projective space and applications to complex symplectic manifolds}, \textit{Adv.\ Stud.\ Pure Math.}, \textbf{35} (2002), 1-88. 

\bibitem{KO}
S.\ Kobayashi and T.\ Ochiai, {Characterizations of complex projective spaces and hyperquadrics}, \textit{J.\ Math.\ Kyoto Univ.}, \textbf{13} (1973) 31-47.

\bibitem{Ko}
J.\ Koll\'ar, \textit{Rational Curves on Algebraic Varieties}, Ergeb.\ Math.\ Grenzgeb., vol.\ 32. Springer, Berlin (1996).

\bibitem{KMM}
J.\ Koll\'ar Y.\ Miyaoka, and S.\ Mori, {Rational connectedness and boundedness of Fano manifolds}, \textit{J.\ Diff.\ Geom.}, \textbf{36} (1992), 765-779. 

\bibitem{Mu}
S.\ Mukai, {Open problems}, In: \textit{Birational geometry of algebraic varieties}, Taniguchi Foundation, Katata (1988).

\bibitem{No}
C.\ Novelli, {On Fano manifolds with an unsplit dominating family of rational curves}, \textit{Kodai Math.\ J.}, \textbf{35}(3) (2012), 425-438. 

\bibitem{NO}
C.\ Novelli and G.\ Occhetta, {Rational curves and bounds on the Picard number of Fano manifolds}, \textit{Geom.\ Dedicata}, \textbf{147} (2010), 207-217. 

\bibitem{Oc}
G.\ Occhetta, {A characterization of products of projective spaces}, \textit{Canad.\ Math.\ BUll.} \textbf{49} (2006), 270-280. 
\end{thebibliography}
\end{document}